\documentclass{amsart}
\usepackage{amscd}
\usepackage{amsmath, amssymb}
\usepackage{amsfonts}
\usepackage{enumerate}

\DeclareMathOperator*{\vol}{\ensuremath{vol}}

\numberwithin{equation}{section}

\theoremstyle{plain}
\newtheorem{theorem}{Theorem}
\newtheorem{defn}{Definition}
\newtheorem{lemma}{Lemma}
\newtheorem{remark}{Remark}

\newtheorem{cor}{Corollary}
\newtheorem{pro}{Proposition}
\newtheorem{example}{Example}

\begin{document}

\title[Rigidity of Hawking mass and isoperimetric surfaces]{Rigidity of Hawking mass for surfaces in three manifolds}
\author{Jiacheng Sun}
\address{Beijing International Center of Mathematics Research, Peking University, Yiheyuan Road 5, Beijing, P.R.China, 100871}
\email{sunxujason@pku.edu.cn}
\maketitle

\begin{abstract}
It is well-know that Hawking mass is nonnegative for a stable constant mean curvature ($CMC$) sphere in three manifold of nonnegative scalar curvature. R. Bartnik proposed the rigidity problem of Hawking mass of stable $CMC$ spheres. In this paper, we show partial rigidity results of Hawking mass for stable $CMC$ spheres in asymptotic flat ($AF$) manifolds with nonnegative scalar curvature. If the Hawking mass of a nearly round stable $CMC$ surface vanishes, then the surface must be standard sphere in $\mathbb{R}^3 $ and the interior of the surface is flat. The similar results also hold for asymptotic hyperbolic manifolds. A complete AF manifold has small or large isoperimetric surface with zero Hawking mass must be flat. We will use the mean-field equation and monotonicity of Hawking mass as well as rigidity results of Y. Shi in our proof.
\end{abstract}

\section{Introduction}

One of the most important tasks in general relativity is to understand the mass of space-time. The first attempt on this topic is Positive Mass Theorem, which says that the mass of asymptotic flat manifold is nonnegative if the scalar curvature is nonnegative, and the mass vanishes if and only if the manifold is isometric to standard Euclidean space. Another important attempt is Penrose Inequality which tells us that the mass is no less than $\sqrt{\frac{A}{16\pi}}$ when there is a horizon, where $A$ is the area of outmost minimal surface, the equality holds if and only if the manifold is isometric to Schwarzschild space. From Penrose Inequality we see the impact of boundary behavior is also remarkable. This motivates us to study quasi-local mass for a compact manifold with boundary.

Brown-York mass is a well defined quasi-local mass for domain with convex boundary, which characterize the deviation of mean curvature compared with Euclidean metric, its positivity and rigidity is proved by \cite{ST}. Another important quasi-local mass is Hawking mass, which played a key role in proving Penrose Inequality in \cite{Br} and \cite{HI}. Because the Willmore functional of a surface can be arbitrarily large, we cannot expect the positivity for arbitrary surface. But for a stable $CMC$ sphere in nonnegative scalar curvature manifold, the Hawking mass is nonnegative \cite{CY}.

Bartnik in \cite[P.235]{Ba} proposed the rigidity problem of Hawking mass, i.e. what can we say about the ambient manifold when the Hawking mass vanishes for some surface. This paper devotes to partial result for rigidity problem if the surface is nearly round. We study the eigenvalue and eigenfunctions of Jacobi operator for stable $CMC$ surface with zero Hawking mass, then transfer the rigidity problem to a mean field type equation with respect to the second eigenvalue 6 of standard $S^2$ under some restriction. If the equation has only zero solution, then the rigidity of Hawking mass holds. We get the local uniqueness by studying the spherical harmonics on $S^2$ carefully and also iteration methods. If the solution is small in some sense, we can get the power decay of both kernel part of $\Delta + 6$ and also the orthogonal part. But we believe that the equation has only zero solution with the integral restriction.

The main term in Hawking mass is the Willmore functional, in $\mathbb{R}^3 $ the Willmore functional is constant 4$\pi$ if and only if the surface is round sphere. So we can detect the curvature of ambient space by Willmore functional. For this reason, we expect that manifold with zero Hawking mass surface may have some flatness properties.
\begin{theorem}\label{cmc rigidity}
Let (M,g) be a complete Riemnnian three manifold with scalar curvature $R(g)\geq0$, $\Omega \subset M$ be a simply connected domain with boundary $\Sigma=\partial\Omega$, if $\Sigma$ is a nearly round stable CMC sphere in M with $m_H(\Sigma)= 0$, then $\Omega$ isometric to a Euclidean ball in $\mathbb{R}^3$.
In particular, $\Sigma$ is isometric to the standard $\mathbb{S}^2$ in $\mathbb{R}^3$.
\\In this paper, nearly round is in the sense that Gauss curvature $K_\Sigma$ is $C^0$ closed to $\sqrt{\frac{4\pi}{|\Sigma|}}$, i.e. $|K_\Sigma-\sqrt{\frac{4\pi}{|\Sigma|}}|_{C^0}<\delta_0$ for some $\delta_0<<1.$
\end{theorem}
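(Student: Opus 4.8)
The plan is to extract the rigidity hidden in the volume-preserving stability inequality, combine it with a Hersch-balanced conformal identification of $\Sigma$ with the round sphere to reduce the condition $m_H(\Sigma)=0$ to a single Liouville/mean-field equation on $(S^2,g_0)$ at the critical parameter for which the second eigenvalue $6$ governs the linearization, then solve that equation using the near-roundness hypothesis, and finally feed the resulting round, umbilic, constant-mean-curvature boundary into Shi's rigidity theorem.

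First I would rescale $g$ so that $|\Sigma|=4\pi$ — this preserves $R\ge 0$, the CMC condition, volume-preserving stability, and $m_H=0$ — so that $m_H(\Sigma)=0$ reads $\int_\Sigma H^2\,dA=16\pi$ and the CMC condition forces $H\equiv 2$. Since $\Sigma$ is a topological sphere I would write $(\Sigma,g_\Sigma)\cong(S^2,e^{2u}g_0)$ with $g_0$ the round metric of area $4\pi$, chosen by Hersch's lemma so that the coordinate functions obey $\int_{S^2}x_ie^{2u}\,dA_0=0$, i.e.\ $\int_\Sigma x_i\,dA_\Sigma=0$. Inserting the $f_i=x_i$ as admissible test functions into volume-preserving stability, summing over $i$, and using $\sum_i f_i^2=1$, the conformal invariance of the Dirichlet energy ($\sum_i\int_\Sigma|\nabla f_i|^2=\sum_i\int_{S^2}|\nabla_0 x_i|^2=8\pi$), the Gauss equation $2K_\Sigma=R-2\,\mathrm{Ric}(\nu,\nu)+H^2-|A|^2$, Gauss--Bonnet, and $\int_\Sigma H^2=16\pi$, one is led to
$$0\ \ge\ \tfrac12\int_\Sigma|\mathring A|^2\,dA+\tfrac12\int_\Sigma R\,dA .$$
Since $R\ge 0$ this forces $\Sigma$ to be umbilic (so $|A|^2=2$ and $\mathrm{Ric}(\nu,\nu)=1-K_\Sigma$), $R|_\Sigma\equiv 0$, and equality throughout; in particular each $f_i$ minimizes the stability form over mean-zero functions, hence $\Delta_{g_\Sigma}f_i+(|A|^2+\mathrm{Ric}(\nu,\nu))f_i=c_i$ for constants $c_i$. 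Using $\Delta_{g_\Sigma}=e^{-2u}\Delta_0$, $\Delta_0 x_i=-2x_i$, and the conformal change law $K_\Sigma e^{2u}=1-\Delta_0 u$, this Jacobi equation becomes $(\Delta_0 u)x_i=c_ie^{2u}-3(e^{2u}-1)x_i$; multiplying by $x_i$, summing (using $\sum x_i^2=1$), and substituting back gives the pointwise identity $\big(\sum_j c_jx_j\big)x_i=c_i$, which after integrating against $dA_0$ (and using $\int_{S^2}x_ix_j\,dA_0=\tfrac{4\pi}{3}\delta_{ij}$) forces $c_i=0$. With $w:=2u$ I then arrive at the mean-field equation
$$\Delta_0 w+6(e^w-1)=0\ \text{ on }(S^2,g_0),\qquad \int_{S^2}e^w\,dA_0=4\pi,\qquad \int_{S^2}x_ie^w\,dA_0=0,$$
whose linearization at $w=0$ is $\Delta_0+6$, with kernel the $5$-dimensional space $V_2$ of degree-two spherical harmonics.

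The \emph{main obstacle} is to show this equation has no nontrivial small solution. I would rewrite it as $(\Delta_0+6)w=-6(e^w-1-w)$, the right side being $O(\|w\|^2)$, and run a Lyapunov--Schmidt reduction: off $V_2$ the operator $\Delta_0+6$ is invertible, so the degree-$0$, degree-$1$, and degree-$\ge 3$ components of $w$ are $O(\|w\|^2)$; moreover the area constraint pins the degree-$0$ component to $-\tfrac1{8\pi}\int_{S^2}w^2\,dA_0+O(\|w\|^3)$ and the balancing constraints pin the degree-$1$ component to $O(\|w\|^2)$, so $w=v_2+O(\|w\|^2)$ with $v_2:=\Pi_{V_2}w$. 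Projecting the equation onto $V_2$, where $\Delta_0+6$ vanishes, gives $\Pi_{V_2}(e^w-1-w)=0$, i.e.\ $\Pi_{V_2}(v_2^2)=O(\|v_2\|^3)$. Identifying $V_2$ with the traceless symmetric $3\times3$ matrices $Q$, the essentially unique $SO(3)$-equivariant quadratic map $v_2\mapsto\Pi_{V_2}(v_2^2)$ is a nonzero multiple of $Q\mapsto Q^2-\tfrac13\mathrm{tr}(Q^2)I$; since $Q^2=\lambda I$ is impossible for a nonzero traceless $3\times3$ matrix (its eigenvalues $\pm\sqrt\lambda$ cannot sum to zero in odd dimension), this map is nondegenerate, so $\|\Pi_{V_2}(v_2^2)\|\ge c\|v_2\|^2$. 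Because near-roundness of $K_\Sigma$ forces $\|w\|$ — hence $\|v_2\|$ — to be small (this step needs a compactness estimate for the prescribed-Gauss-curvature equation together with the normalization $\int e^w=4\pi$), the two bounds on $\Pi_{V_2}(v_2^2)$ are incompatible unless $v_2=0$, and then $w=O(\|w\|^2)$ gives $w\equiv 0$. This is exactly the "spherical harmonics" analysis and "iteration" mentioned in the abstract, run quantitatively; the genuinely hard ingredients are the near-roundness $\Rightarrow$ smallness estimate, the Moser--Trudinger-type control of the nonlinear remainder, and the multiplication table of spherical harmonics.

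With $w\equiv 0$ the metric $g_\Sigma$ is round, so $\Sigma$ is isometric to the standard $S^2$; being umbilic with $H\equiv 2$ it then carries precisely the induced metric and mean curvature of the unit sphere in $\mathbb{R}^3$ (equivalently, its Brown--York mass vanishes). Since $R(g)\ge 0$ on $\Omega$ and $\Omega$ is simply connected with this round boundary, I would invoke Shi's rigidity result — the equality case of Shi--Tam/Brown--York positivity — to conclude that $\Omega$ is isometric to a closed Euclidean ball, which is the assertion of the theorem.
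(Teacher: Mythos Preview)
Your proposal is correct and follows the paper's overall strategy: equality in the Christodoulou--Yau/Hersch argument forces umbilicity, $R|_\Sigma=0$, and a Jacobi-eigenfunction condition that reduces to the mean-field equation $\Delta_0 w+6(e^w-1)=0$ on $(S^2,g_0)$; a Lyapunov--Schmidt reduction together with nondegeneracy of $v_2\mapsto\Pi_{V_2}(v_2^2)$ on the kernel $V_2=\ker(\Delta_0+6)$ forces $w\equiv 0$ for small $w$; then Shi--Tam rigidity finishes. Two implementation differences are worth noting. First, to pass from equality in stability to the mean-field equation the paper quotes El~Soufi--Ilias's sharp second-eigenvalue theorem and its equality case, whereas you argue directly that each $f_i=x_i$ minimizes the stability form, deduce $L_\Sigma f_i=c_i$, and kill the Lagrange multipliers $c_i$ by your pointwise identity $(\sum_j c_jx_j)x_i=c_i$; this avoids the black-box citation. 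Second, for the key nondegeneracy $\|\Pi_{V_2}(v_2^2)\|\ge c\|v_2\|^2$ the paper computes the spherical-harmonic multiplication table explicitly and obtains the exact constant $\tfrac{1}{7}\sqrt{5/\pi}$ (their Lemma~7), while your $SO(3)$-equivariant identification of $V_2$ with traceless symmetric $3\times3$ matrices and the map $Q\mapsto Q^2-\tfrac13\mathrm{tr}(Q^2)I$ gives the same conclusion more conceptually. Two places where you overestimate the difficulty: the implication ``near-round $\Rightarrow$ $w$ small in $C^0$'' is immediate from the pointwise identity $K_\Sigma-1=2(1-e^{-w})$ that the equation itself yields (no compactness estimate needed), and since $\sup|w|$ is small the nonlinear remainder $e^w-1-w$ is controlled by elementary Taylor bounds, so Moser--Trudinger plays no role. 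One small slip: after projecting onto $V_2$ you get $\Pi_{V_2}(v_2^2)=O(\|w\|^3)$, not $O(\|v_2\|^3)$; this still combines with $\|v_2\|^2\le C\|\Pi_{V_2}(v_2^2)\|$ and $w=v_2+O(\|w\|^2)$ to give $\|w\|\le C\|w\|^{3/2}$, hence $w\equiv0$ for $\|w\|$ small---which is exactly the paper's iteration, compressed.
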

The hyperbolic case of above rigidity is following:
\begin{theorem}\label{hyperbolic cmc rigidity}
Let (M,g) be a complete Riemnnian three manifold with scalar curvature $R(g)\geq-6$, $\Omega \subset M$ be a simply connected domain with boundary $\Sigma=\partial\Omega$, if $\Sigma$ is a nearly round stable CMC sphere in M with $m_H(\Sigma)= 0$, then $\Omega$ isometric to a hyperbolic ball in $\mathbb{H}^3$.
\end{theorem}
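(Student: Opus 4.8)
The plan is to run the argument of Theorem~\ref{cmc rigidity} with the Euclidean background quantities replaced by their hyperbolic analogues. Normalize the Hawking mass so that it vanishes on geodesic spheres of $\mathbb{H}^3$, i.e.
\begin{equation*}
m_H(\Sigma)=\sqrt{\frac{|\Sigma|}{16\pi}}\left(1+\frac{|\Sigma|}{4\pi}-\frac{1}{16\pi}\int_\Sigma H^2\,dA\right).
\end{equation*}
Since $\Sigma$ is $CMC$, $m_H(\Sigma)=0$ pins down $H^2=4+\tfrac{16\pi}{|\Sigma|}$; set $\kappa_0=\tfrac{4\pi}{|\Sigma|}=1/\rho^2$ with $\rho=\sqrt{|\Sigma|/4\pi}$, and let $r$ be the radius with $\sinh^2 r=\rho^2$, so that the geodesic sphere $S=\partial B_r\subset\mathbb{H}^3$ has area $|\Sigma|$ and mean curvature $2\coth r=\sqrt{H^2}$.

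\emph{Step 1: stability forces umbilicity.} By Hersch's lemma choose a conformal diffeomorphism $\phi=(\phi^1,\phi^2,\phi^3)\colon\Sigma\to\mathbb{S}^2\subset\mathbb{R}^3$ with $\int_\Sigma\phi^i\,dA=0$; then $\sum_i(\phi^i)^2\equiv1$ and $\sum_i\int_\Sigma|\nabla\phi^i|^2\,dA=8\pi$. Insert the $\phi^i$ into the $CMC$ stability form $Q(f)=\int_\Sigma\bigl(|\nabla f|^2-(|A|^2+\mathrm{Ric}(\nu,\nu))f^2\bigr)\,dA$, use the Gauss equation in the form $|A|^2+\mathrm{Ric}(\nu,\nu)=\tfrac12|\mathring A|^2+\tfrac34H^2+\tfrac12R(g)-K_\Sigma$, Gauss--Bonnet, $R(g)\ge-6$, and the value of $H^2$; this gives
\begin{equation*}
0\le\sum_i Q(\phi^i)=-\tfrac12\int_\Sigma|\mathring A|^2\,dA-3|\Sigma|-\tfrac12\int_\Sigma R(g)\,dA\le-\tfrac12\int_\Sigma|\mathring A|^2\,dA .
\end{equation*}
Hence $\mathring A\equiv0$ (so $\Sigma$ is totally umbilic), $R(g)\equiv-6$ along $\Sigma$, and $Q(\phi^i)=0$ for each $i$. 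Since $Q\ge0$ on mean-zero functions, this yields $L\phi^i=c_i$ for constants $c_i$, where $L=\Delta_\Sigma+(\tfrac34H^2-3-K_\Sigma)$ is the Jacobi operator after using $\mathring A\equiv0$, $R(g)|_\Sigma\equiv-6$. As $\phi$ is conformal one also has $\Delta_\Sigma\phi^i=-2\Lambda\phi^i$ with $\phi^*g_{\mathbb{S}^2}=\Lambda\,g_\Sigma$; combining gives $(\sum_j c_j\phi^j)\phi^i=c_i$, and since then $h:=\sum_j c_j\phi^j$ satisfies $h^2\equiv\sum_j c_j^2$, $h$ is constant, which forces the $\phi^i$ constant unless all $c_j=0$. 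Thus $\phi^i\in\ker L$ and $\Lambda=\tfrac38H^2-\tfrac32-\tfrac12K_\Sigma=\tfrac32\kappa_0-\tfrac12K_\Sigma$.

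\emph{Step 2: mean-field reduction.} Transport $g_\Sigma$ to $\mathbb{S}^2$ by $\phi^{-1}$ and write $(\phi^{-1})^*g_\Sigma=e^{2u}g_{\mathbb{S}^2}$, so $e^{-2u}=\Lambda\circ\phi^{-1}$. The Gaussian-curvature identity under a conformal change, combined with $\Lambda=\tfrac32\kappa_0-\tfrac12K_\Sigma$, reduces to the mean-field type equation
\begin{equation*}
\Delta_{\mathbb{S}^2}u+3\kappa_0e^{2u}=3\qquad\text{on }(\mathbb{S}^2,g_{\mathbb{S}^2}),
\end{equation*}
together with the constraints $\int_{\mathbb{S}^2}x^ie^{2u}\,dA_{\mathbb{S}^2}=0$ (the Hersch balancing) and $\int_{\mathbb{S}^2}e^{2u}\,dA_{\mathbb{S}^2}=|\Sigma|$. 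The constant $u_0=-\tfrac12\ln\kappa_0$ solves it, and its linearization is $\Delta_{\mathbb{S}^2}+6$ — the second eigenvalue $6=\ell(\ell+1)$, $\ell=2$, of $\mathbb{S}^2$, with kernel the $5$-dimensional space of degree-$2$ spherical harmonics.

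\emph{Step 3: local uniqueness.} By the nearly-round hypothesis $\|u-u_0\|_{C^0}$ is small, and elliptic bootstrap makes it small in $C^{2,\alpha}$. Decompose $u-u_0$ into spherical harmonics: the $\ell=0$ mode is pinned by the area normalization, the $\ell=1$ mode by the Hersch balancing, and the $\ell\ge3$ part is controlled by the invertibility (with estimate) of $\Delta_{\mathbb{S}^2}+6$ on that range, so an iteration bounds all of these quadratically in $\|u-u_0\|$. For the resonant $\ell=2$ mode a second-order (Lyapunov--Schmidt) expansion of the equation against the degree-$2$ harmonics, using the constraints, forces this mode to vanish as well once $\|u-u_0\|$ is small enough. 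Hence $u\equiv u_0$, so $g_\Sigma$ is round of curvature $\kappa_0$ and $(\Sigma,g_\Sigma)$ is isometric to the round $\mathbb{S}^2$ of radius $\rho$; together with $\mathring A\equiv0$ and $H^2=4+16\pi/|\Sigma|$, its induced metric and second fundamental form coincide with those of $S=\partial B_r\subset\mathbb{H}^3$. This step is the main obstacle: the $\ell=2$ resonance blocks a direct implicit-function argument, and proving that the mean-field equation with the integral constraints has only the constant solution in full generality is the open point — we establish it only in the nearly-round regime.

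\emph{Step 4: interior rigidity.} Glue $\Omega$ to $\mathbb{H}^3\setminus B_r$ along $\Sigma\cong S$. Because the induced metrics and the mean curvatures match (here one uses that $\Sigma$ is mean-convex, which holds as it is nearly round), the resulting asymptotically hyperbolic manifold $\widehat M$ has scalar curvature $\ge-6$ in the distributional sense and vanishing mass. Smoothing the corner and invoking the positive mass theorem for asymptotically hyperbolic manifolds in its rigidity form, following the rigidity results of Y.~Shi, one concludes $\widehat M$ is isometric to $\mathbb{H}^3$; hence $\Omega$ is isometric to the hyperbolic ball $B_r$, which also gives the stated rigidity of $\Sigma$. (Alternatively, one may foliate a neighbourhood of $\Sigma$ in $\Omega$ by stable $CMC$ spheres and use the monotonicity of the hyperbolic Hawking mass together with $m_H\ge0$ to propagate the rigidity inward, then integrate the resulting Riccati-type structure.)
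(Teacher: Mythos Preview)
Your approach matches the paper's: Theorems~\ref{cmc rigidity} and~\ref{hyperbolic cmc rigidity} are proved together by normalizing $|\Sigma|=4\pi$ (so $H=2\sqrt2$ in the hyperbolic case), reducing via Proposition~\ref{eigenvalue} to the same mean-field equation $\Delta_{g_0}u=6-6e^u$ (your Step~2 with $\kappa_0=1$ and $2u$ in place of $u$), invoking the local-uniqueness Lemma~\ref{uniqueness} to conclude $\Sigma$ is the round sphere, and then citing Shi--Tam's hyperbolic rigidity (Lemma~\ref{hyperbolic sphere rigidity}) for the interior. Your Step~1 re-derives by hand what the paper gets from the El~Soufi--Ilias eigenvalue lemma (Lemma~\ref{cmc rigidity11}); your Step~4 sketches the gluing/positive-mass proof of the result the paper simply quotes as Lemma~\ref{hyperbolic sphere rigidity}.

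The one place your outline is genuinely thinner than the paper is Step~3. The paper's iteration closes only because of Lemma~\ref{projection}, an \emph{exact} identity
\[
|P_2(u_2^2)|_{L^2}=\tfrac{1}{7}\sqrt{5/\pi}\,|u_2|_{L^2}^2
\]
obtained by an explicit spherical-harmonic computation (Appendix~\ref{shperical harmonics}). This non-degeneracy of the quadratic projection onto $E_2$ is what turns $|P_2 u_2^2|_{L^2}\lesssim\delta^3$ into $|u_2|\lesssim\delta^{3/2}$ and makes the bootstrap converge. Your sentence ``a second-order Lyapunov--Schmidt expansion, using the constraints, forces this mode to vanish'' does not name this ingredient, and in fact the paper uses \emph{neither} the Hersch balancing $\int x_i e^u=0$ nor the area constraint in the proof of Lemma~\ref{uniqueness}: the $\ell=0$ and $\ell=1$ modes are lumped into $E_2^\perp$, where $\Delta+6$ is invertible, and the integral constraints give no direct handle on the resonant $\ell=2$ mode. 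So ``using the constraints'' there is a red herring; the actual mechanism is the algebraic identity of Lemma~\ref{projection}.
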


By the examples of A. Carlotto and R. Schoen \cite{CS} there are manifolds with nonnegative scalar curvature and flat in half space of $\mathbb{R}^3$, so we can only expect flatness inside the surface with zero Hawking mass for stable $CMC$ surface. But we can get the global flatness for isoperimetric surface of sphere type:
\begin{theorem}\label{iso rigidity}
Let (M,g) be a complete AF three manifold with scalar curvature $R(g)\geq0$, if there exists a nearly round isoperimetric surface $\Sigma$ with $m_H(\Sigma)= 0$, then (M,g) is isometric to $(\mathbb{R}^3, \delta) $.
\end{theorem}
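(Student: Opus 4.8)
The plan is to reduce Theorem~\ref{iso rigidity} to the interior rigidity already obtained in Theorem~\ref{cmc rigidity} and then to globalize the conclusion using the isoperimetric hypothesis together with the behaviour of the Hawking mass at spatial infinity.

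First I would check that the hypotheses of Theorem~\ref{cmc rigidity} are met. An isoperimetric surface is automatically a stable CMC surface, since its enclosing region has constant mean curvature boundary and the second variation of area under volume-preserving deformations is nonnegative; being nearly round it has positive Gauss curvature, so by Gauss--Bonnet it is a sphere, and the region $\Omega$ it bounds is a topological $3$-ball, in particular simply connected. Therefore Theorem~\ref{cmc rigidity} gives that $\Omega$ is isometric to a Euclidean ball $B_{r_0}\subset\mathbb{R}^3$. In particular $|\Sigma| = 4\pi r_0^2$, $V_0 := |\Omega| = \tfrac{4}{3}\pi r_0^3$, and $\Sigma$ has constant mean curvature $H = 2/r_0$, so that its area and enclosed volume satisfy the Euclidean isoperimetric identity $|\Sigma|^3 = 36\pi\,V_0^2$.

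Next I would globalize. Since $\Sigma$ is isoperimetric, the isoperimetric profile $I_M$ of $(M,g)$ satisfies $I_M(V_0) = |\Sigma| = (36\pi V_0^2)^{1/3}$, i.e. the profile of $(M,g)$ meets the Euclidean profile $I_{\mathbb{R}^3}(V) = (36\pi V^2)^{1/3}$ at $V_0$. Invoking the sharp isoperimetric comparison of Y.~Shi for asymptotically flat three-manifolds with nonnegative scalar curvature, whose equality case characterizes $(\mathbb{R}^3,\delta)$, this forces $(M,g)$ to be isometric to $(\mathbb{R}^3,\delta)$, which is the assertion. Alternatively, one can run the argument through the mass at infinity: for small volumes the isoperimetric surfaces $\Sigma_V$ are nearly round stable CMC spheres with $m_H(\Sigma_V)\ge 0$ by Christodoulou--Yau and $m_H(\Sigma_V)\to 0$ as $V\to 0$ by the small-sphere expansion, while by the monotonicity of the Hawking mass along the isoperimetric exhaustion (which uses $R(g)\ge 0$) the function $V\mapsto m_H(\Sigma_V)$ is nondecreasing; combined with $m_H(\Sigma) = 0$ this pins $m_H(\Sigma_V)\equiv 0$ for $V\le V_0$, hence by Theorem~\ref{cmc rigidity} and the small-sphere expansion $R(g)\equiv 0$, and then propagating flatness outward through the now scalar-flat isoperimetric foliation and using $m_H(\Sigma_V)\to m_{\mathrm{ADM}}$ as $V\to\infty$ gives $m_{\mathrm{ADM}} = 0$, so the rigidity case of the Positive Mass Theorem finishes the proof.

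I expect the main obstacle to be precisely this globalization. Theorem~\ref{cmc rigidity} supplies only flatness of $\Omega$, and the Carlotto--Schoen examples show that flatness of a CMC-bounded region need not spread, so one genuinely needs the \emph{isoperimetric} character of $\Sigma$. Making this rigorous requires a sharp and rigid isoperimetric inequality on AF manifolds with $R(g)\ge 0$ (Shi), and/or careful control of the isoperimetric foliation exterior to $\Omega$ --- its regularity, the connectedness and outer-minimizing property of the leaves, the fact that admissible volumes extend to $+\infty$, and the identification of the limiting Hawking mass with $m_{\mathrm{ADM}}$ --- so that the Geroch / weak inverse mean curvature flow monotonicity can be propagated from $\Sigma$ all the way to infinity. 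A secondary technical point is confirming that the region enclosed by a nearly round isoperimetric sphere is indeed a topological ball, as used above.
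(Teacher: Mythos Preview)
Your primary route is exactly the paper's proof: apply Theorem~\ref{cmc rigidity} to conclude that the enclosed region $\Omega$ is a Euclidean ball, so that the isoperimetric profile satisfies $I(V_0)=(36\pi)^{1/3}V_0^{2/3}$, and then invoke the rigidity case of Shi's isoperimetric inequality (Lemma~\ref{shi rigidity}) to force $(M,g)\cong(\mathbb{R}^3,\delta)$. The extra justifications you supply (stability, sphere topology, simple connectedness of $\Omega$) are reasonable details the paper leaves implicit, and your alternative monotonicity/PMT route is not needed here---though it parallels the paper's separate treatment of small isoperimetric surfaces in Theorem~\ref{rigidity2}.
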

This theorem also have an hyperbolic version:
\begin{theorem}\label{hyperbolic iso rigidity}
Let (M,g) be a complete AH three manifold with scalar curvature $R(g)\geq-6$, if there exists a nearly round isoperimetric surface $\Sigma$ with $m_H(\Sigma)= 0$, then (M,g) is isometric to $(\mathbb{H}^3, g_{\mathbb{H}}) $.
\end{theorem}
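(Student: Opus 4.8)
The plan is to reduce Theorem~\ref{hyperbolic iso rigidity} to Theorem~\ref{hyperbolic cmc rigidity} together with a monotonicity--rigidity analysis of the Hawking mass along the isoperimetric foliation. An isoperimetric surface is automatically a stable CMC sphere, and since $\Sigma$ is nearly round it is a small graph over a round sphere and bounds a simply connected domain $\Omega\subset M$ with $\partial\Omega=\Sigma$. With $R(g)\geq-6$ and $m_H(\Sigma)=0$, Theorem~\ref{hyperbolic cmc rigidity} applies and gives that $\Omega$ is isometric to a geodesic ball $B_{r_0}\subset\mathbb{H}^3$ and that $\Sigma$ is a geodesic sphere carrying its hyperbolic second fundamental form. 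It therefore remains to propagate this rigidity across the exterior $M\setminus\Omega$; concretely, I would aim to show that the total asymptotically hyperbolic mass $m_{AH}(M,g)$ vanishes, after which $(M,g)\cong(\mathbb{H}^3,g_{\mathbb{H}})$ follows from the rigidity case of the positive mass theorem for asymptotically hyperbolic manifolds (Wang, Chru\'{s}ciel--Herzlich, Andersson--Cai--Galloway).

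To obtain $m_{AH}(M,g)=0$, I would run the isoperimetric foliation. Inside $\Omega\cong B_{r_0}$ the concentric geodesic spheres $\{\Sigma_s\}_{0<s\le r_0}$ foliate $\Omega$, are stable CMC spheres, have $R\equiv-6$ along them, and so carry (hyperbolically normalized) Hawking mass $m_H(\Sigma_s)=0$, matching $\Sigma$ at $s=r_0$. Near infinity, since $(M,g)$ is AH with $R\geq-6$, there exist unique, nested isoperimetric regions of all sufficiently large volume whose boundaries $\Sigma_V$ form a foliation by stable CMC spheres---the asymptotically hyperbolic analogue of the Eichmair--Metzger isoperimetric foliation---and a continuation argument from the nondegenerate isoperimetric region $\Omega$ joins these to $\Sigma$. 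Along the resulting foliation the function $V\mapsto m_H(\Sigma_V)$ is monotone nondecreasing (Geroch--Hawking monotonicity, which uses precisely $R\geq-6$), is nonnegative on every stable CMC sphere (the hyperbolic Christodoulou--Yau estimate), and converges to $m_{AH}(M,g)$ as $V\to\infty$. It vanishes identically on the inner part $\{s\le r_0\}$ and equals $0$ at the join $\Sigma=\Sigma_{V_0}$ with $V_0=\vol(\Omega)$; combining this with the rigidity results of Y. Shi---which forbid the isoperimetric profile of a manifold with $R\geq-6$ from meeting the model profile of $\mathbb{H}^3$ at one volume without coinciding with it---forces $m_H(\Sigma_V)\equiv0$, hence $m_{AH}(M,g)=0$.

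The main obstacle is exactly this last propagation step. Running the monotonicity formula outward from $\Sigma$ by itself yields only $m_{AH}(M,g)\geq0$, i.e. the positive mass theorem; what is needed is the \emph{equality} case---a maximum-principle statement for the isoperimetric profile asserting that if it agrees with the $\mathbb{H}^3$ profile at a single volume realized by a region with hyperbolic interior, then it agrees with it for all volumes. Making this rigorous requires (i) the existence, uniqueness, and regularity of the large-volume isoperimetric foliation in the AH setting and the asymptotic computation that its Hawking mass limits to $m_{AH}$; (ii) the sharp form of the monotonicity formula along the isoperimetric/CMC foliation, with its $(R+6)$ and $|\mathring{A}|^2$ error terms, together with the identification of equality with total umbilicity and $R=-6$; and (iii) verifying that a nearly round isoperimetric surface with $m_H=0$ indeed bounds a domain satisfying the hypotheses of Theorem~\ref{hyperbolic cmc rigidity}. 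Once these are in place, the argument is the verbatim hyperbolic counterpart of the proof of Theorem~\ref{iso rigidity}: replace $\mathbb{R}^3$ by $\mathbb{H}^3$, the condition $R\geq0$ by $R\geq-6$, the ordinary Hawking mass by its hyperbolic normalization, and the ADM positive mass theorem by its asymptotically hyperbolic version.
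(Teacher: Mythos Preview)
Your argument is far more elaborate than what is needed, and in fact you already hold the decisive ingredient but are under-using it. Once Theorem~\ref{hyperbolic cmc rigidity} tells you that the isoperimetric region $\Omega$ is isometric to a geodesic ball in $\mathbb{H}^3$ of volume $V_0$, you know immediately that $I(V_0)=|\Sigma|=I_{\mathbb{H}}(V_0)$: the boundary area of the isoperimetric region equals the model profile at that volume. The rigidity result you invoke (stated in the paper as Lemma~\ref{hyperbolic shi rigidity}, due to Ji--Shi--Zhu) then concludes \emph{directly} that $(M,g)$ is isometric to $(\mathbb{H}^3,g_{\mathbb{H}})$---not merely that the two isoperimetric profiles coincide. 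This is exactly the paper's two-line proof: Theorem~\ref{hyperbolic cmc rigidity} followed by Lemma~\ref{hyperbolic shi rigidity}.

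All of the machinery you propose---existence and uniqueness of the large-volume isoperimetric foliation in the AH setting, Geroch monotonicity along it, convergence of the Hawking mass to $m_{AH}$, and the rigidity case of the AH positive mass theorem---is unnecessary, and several of these steps are themselves nontrivial (as you yourself flag). Note also that monotonicity alone only gives $m_{AH}\ge m_H(\Sigma)=0$; to force equality you appeal to Shi's rigidity, but that result already delivers the full isometry to $\mathbb{H}^3$ before any foliation is built. In other words, the one step that closes your argument is precisely the step that renders the rest of it redundant.
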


We already know from \cite{CESY} that large surfaces of the canonical stable CMC foliation in \cite{HY,QT} are isoperimetric and close to the coordinate spheres. So we can get the rigidity result for large isoperimetric surface. For rigidity of small isoperimetric surface, we use the monotonicity of Hawking mass and also a rigidity result of Y. Shi.
\begin{theorem}\label{rigidity2}
Let (M,g) be a complete AF three manifold with scalar curvature $R(g)\geq0$, if there is an small enough isoperimetric surface $\Sigma$ with $m^+_H(\Sigma)= 0$ (see Definition \ref{maximal mass}), then (M,g) is isometric to $\mathbb{R}^3$.
\end{theorem}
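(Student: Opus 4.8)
The plan is to reduce Theorem~\ref{rigidity2} to the vanishing of the ordinary Hawking mass of $\Sigma$ and then invoke the interior rigidity of Theorem~\ref{cmc rigidity} together with a Shi-type rigidity in the exterior. First I would record that a small enough isoperimetric region of a complete AF manifold is a smooth, embedded, stable CMC sphere which is $C^{2,\alpha}$-close to a small geodesic ball; in particular its boundary is nearly round in the sense of Theorem~\ref{cmc rigidity} and its interior $\Omega$ is simply connected, and by \cite{CY} its Hawking mass is nonnegative. Next, by construction (Definition~\ref{maximal mass}) the maximal mass $m_H^+(\Sigma)$ dominates $m_H(\Sigma)$ along the isoperimetric foliation through $\Sigma$, and the monotonicity of Hawking mass along that foliation when $R(g)\ge 0$ gives $0 = m_H^+(\Sigma) \ge m_H(\Sigma) \ge 0$, hence $m_H(\Sigma)=0$; in fact every isoperimetric surface lying between $\Sigma$ and spatial infinity along the foliation then has vanishing Hawking mass.

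Now I would apply Theorem~\ref{cmc rigidity} to $\Sigma$: its interior $\Omega$ is isometric to a Euclidean ball $B_r$, so $\Sigma$, with its induced metric and its second fundamental form in $M$, is the standard round sphere $S^2_r$ --- totally umbilic with mean curvature $H\equiv 2/r$, which is exactly the mean curvature $H_0$ of the isometric embedding of $(\Sigma,g|_\Sigma)$ into $\mathbb{R}^3$. Consequently the Brown--York mass of $\Sigma$ vanishes, $m_{BY}(\Sigma)=\frac{1}{8\pi}\int_\Sigma(H_0-H)\,d\sigma=0$. (Equivalently, one could already invoke Theorem~\ref{iso rigidity} at this stage; the argument below makes the exterior step explicit.)

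It remains to show the exterior $N:=M\setminus\mathrm{int}\,\Omega$ is isometric to $\mathbb{R}^3\setminus B_r$. The manifold $N$ is complete, AF, has $R(g)\ge 0$, and its compact boundary is the round sphere $S^2_r$ with mean curvature $H_0$. I would first argue $m_{ADM}=0$: by \cite{CESY} the large isoperimetric surfaces agree with the canonical CMC foliation of \cite{HY,QT}, are close to the coordinate spheres, and have Hawking mass converging to $m_{ADM}$, so the monotonicity above together with $m_H^+(\Sigma)=0$ forces $m_{ADM}\le 0$, while the Positive Mass Theorem gives $m_{ADM}\ge 0$; hence $m_{ADM}(N)=0$. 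Then the equality case of the Shi--Tam theorem \cite{ST}, applied to $N$ (whose boundary Brown--York mass and ADM mass both vanish), forces $N$ to be isometric to the region of $\mathbb{R}^3$ outside $B_r$. Gluing $N$ to $\Omega$ along $\Sigma$ yields $(M,g)\cong(\mathbb{R}^3,\delta)$.

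I expect the exterior step to be the main obstacle, for two reasons. The Carlotto--Schoen examples \cite{CS} show interior flatness never suffices on its own, so the isoperimetric --- not merely CMC --- character of $\Sigma$ must be used decisively, and it is used precisely to propagate the vanishing of the Hawking mass all the way to infinity and thereby pin down $m_{ADM}$. The delicate technical point is that the isoperimetric profile of $(M,g)$ need not be smooth, so the isoperimetric foliation through the small surface $\Sigma$ may fail to connect continuously to the large-volume regime of \cite{CESY}; handling this gap --- either by running the Shi--Tam parabolic foliation directly in $N$ and exploiting its monotone quasi-local mass, or by a Druet-type small-volume expansion of the Hawking mass that forces $R\equiv 0$ near $\Sigma$ --- is where the real work lies.
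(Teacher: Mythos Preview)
Your core reduction is sound: from $I'_+(V)\le H_\Sigma$ and $I$ increasing you get $m_H^+(\Sigma)\ge m_H(\Sigma)$ (up to normalization), Christodoulou--Yau gives $m_H(\Sigma)\ge 0$, so $m_H(\Sigma)=0$; the small-volume analysis behind Lemma~\ref{small isopri} makes $\Sigma$ nearly round, and Theorem~\ref{iso rigidity} finishes. The paper in fact notes this route in one line just before the proof, but then writes out a genuinely different argument that bypasses the near-roundness hypothesis entirely. The paper runs the monotonicity of $m_H^+$ \emph{downward} on $[0,V_0]$ (where Lemma~\ref{small isopri} guarantees the isoperimetric surfaces are connected spheres), uses $\lim_{V\to 0}m_H^+(V)=0$ to force $m_H^+\equiv 0$ on $[0,V_0]$, reads this as the ODE $I(V)\,I'_+(V)^2=16\pi$, integrates with $I(0)=0$ to $I(V)=(36\pi)^{1/3}V^{2/3}$ on $[0,V_0]$, and concludes directly from Shi's isoperimetric rigidity (Lemma~\ref{shi rigidity}). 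Your route is shorter once Theorem~\ref{iso rigidity} is available, but it leans on the mean-field uniqueness Lemma~\ref{uniqueness} and therefore inherits the ``nearly round'' restriction; the paper's ODE argument is independent of that machinery and needs only that small isoperimetric surfaces are connected.

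Your extended exterior discussion (propagating $m_H^+=0$ out to infinity, then Brown--York/Shi--Tam on $M\setminus\Omega$) is unnecessary and, as you yourself flag, shaky: the upward monotonicity of $m_H^+$ in Lemma~\ref{increasing} requires a \emph{connected} isoperimetric surface at every volume, which is established only for small $V$ (Lemma~\ref{small isopri}) and for large $V$ (\cite{CESY}), not across the intermediate range; and the Shi--Tam rigidity of \cite{ST} is formulated for compact manifolds with convex boundary, not for AF exteriors with an inner boundary, so applying it to $N=M\setminus\Omega$ would need a separate justification. None of this is needed once you simply invoke Theorem~\ref{iso rigidity}, which already packages the global step via Lemma~\ref{shi rigidity}. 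A minor point: the monotonicity you cite in passing when deducing $m_H(\Sigma)=0$ plays no role there---the pointwise inequality $m_H^+(\Sigma)\ge m_H(\Sigma)$ together with $m_H(\Sigma)\ge 0$ already suffices.
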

\noindent{{\textbf{Structure of this paper}}}
In Section 2, we give the basic definitions. In Section 3, we prove the rigidity of Hawking mass for nearly round stable $CMC$ spheres. We transform the rigidity problem to a mean field type equation, and prove the local uniqueness of zero solution. By doing so, we get the surface with zero Hawking mass must be standard $\mathbb{S}^2$ and then use the rigidity of \cite{ST}\cite{ST07} to finish the proof. In Section 4, we prove the global properties of manifolds with nearly round isoperimetric surface have zero Hawking mass. This directly implies the rigidity for large isoperimetric surface in canonical stable $CMC$ foliation by Huisken-Yau \cite{HY} and Qing-Tian \cite{QT}. In Section 5, we prove the rigidity for small isoperimetric surface by using the monotonicity of Hawking mass. This relies on the fact that topology of small isoperimetric surface must be sphere. In Appendix \ref{sphere rigidity} we give the spherical harmonics and computations for the square of second order spherical harmonics. In Appendix \ref{existence} we give a sketch proof of existence of isoperimetric surfaces for all volumes in AF 3 manifolds. In Appendix \ref{continuity} we sketch the proof of continuity of isoperimetric profile for AF manifolds which is important to prove the right continuity of $I'_+$.

\section{Preliminaries}
We give some basic notations to present our result. Let $\Sigma \subset (M,g)$ be a surface with unit normal vector field $n$, second fundamental form $A$ and mean curvature $H$.
\begin{defn}
The Willmore functional of $\Sigma$ is defined by:
\begin{eqnarray}
W(\Sigma)=\frac{1}{4}\int_{\Sigma}{H^2}.
\end{eqnarray}
when $R(g)\geq0$.
\begin{eqnarray}
W(\Sigma)=\frac{1}{4}\int_{\Sigma}{(H^2-4)}.
\end{eqnarray}
when $R(g)\geq-6$.
\end{defn}
Willmore functional appears in various aspects, such as bending energy of elastic membranes. It is appears naturally in general relativity in form of Hawking mass of a surface:
\begin{defn}
The Hawking mass of $\Sigma$ is defined by:
\begin{eqnarray}
m_H(\Sigma)=\frac{{|\Sigma|^{\frac{1}{2}}}}{(16\pi)^{\frac{3}{2}}}(16\pi-\int_{\Sigma}{H^2}).
\end{eqnarray}
when $R(g)\geq0$.
\begin{eqnarray}
m_H(\Sigma)=\frac{{|\Sigma|^{\frac{1}{2}}}}{(16\pi)^{\frac{3}{2}}}(16\pi-\int_{\Sigma}(H^2-4)).
\end{eqnarray}
when $R(g)\geq-6$.
\end{defn}
\begin{defn}
If $H$ is constant along $\Sigma$, we say $\Sigma$ is a CMC surface;
\\The Jacobi operator of a CMC surface $\Sigma$ is the second variation of area:
\begin{eqnarray}
L_{\Sigma}=-\Delta_{\Sigma}-(|A|^2+Ric(n,n))
\end{eqnarray}

A CMC surface $\Sigma$ is $stable$ if the second eigenvalue of $L_{\Sigma}$ on mean zero functions is nonnegative
\begin{eqnarray}
\Lambda_2(L_{\Sigma})=inf\{\int_{\Sigma}fL_{\Sigma}f: \int_{\Sigma}f=0, \int_{\Sigma}f^2=1\}\geq 0
\end{eqnarray}
i.e. it satisfies the following stability condition:
\begin{eqnarray}
\int_{\Sigma}{(|A|^2+Ric(n,n))f^2}\leq \int_{\Sigma}{|\nabla f|^2}
\end{eqnarray}
for all $f\in C_c^\infty(\Sigma)$ and $\int_{\Sigma}f=0$.
\end{defn}
\begin{remark}
The above definition of eigenvalue in mean zero functions is different from the eigenvalue defined in the ordinary way by min-max construction:
\begin{eqnarray}\label{real eigenvalue}
\lambda_2(L_{\Sigma})=inf\{\int_{\Sigma}fL_{\Sigma}f: \int_{\Sigma}fu_0 = 0, \int_{\Sigma}f^2=1\}
\end{eqnarray}
where $u_0$ is the first eigenfunction of $L_{\Sigma}$. By definition we have
\begin{eqnarray}
\Lambda_2(L_{\Sigma})\leq \lambda_2(L_{\Sigma})
\end{eqnarray}

\end{remark}
We also want to study the isoperimetric surface in $AF(resp. AH)$ three manifold, we will always use the bracket to denote asymptotic hyperbolic case after related asymptotic flat situations.
\begin{defn}
A complete connected three manifold (M,g) is called AF(resp. AH), if there exists a constant $C> 0$, a compact set $K$, such that $M\setminus K$ is diffeomorphic to $\mathbb{R}^3\setminus B_R(0)$ for some $R>0$, and in standard coordinate the metric g has the following properties:
\begin{eqnarray}
g=\delta+h(resp. g=g_{\mathbb{H}}+h)
\end{eqnarray}
and
\begin{eqnarray}
|h_{ij}|+r|\partial h_{ij}|+r^2|\partial^2 h_{ij}|\leq Cr^{-\tau}
\end{eqnarray}
$\tau\in (\frac{1}{2},1](resp. \tau=3)$, where r and $\partial$ denote the Euclidean distance and standard derivative operator on $\mathbb{R}^3$ respectively. The region $M\setminus K$ is called the end of M.
\\The standard hyperbolic space $(\mathbb{H}^3, g_{\mathbb{H}}) $ is
\begin{eqnarray}
g_{\mathbb{H}}=\frac{1}{1+r^2}dr^2+r^2g_{\mathbb{S}^2}
\end{eqnarray}
\end{defn}

\begin{defn}
Given a complete Riemannian 3-manifold (M,g), its isoperimetric profile function $I : [0,\infty) \rightarrow [0,\infty) $ is defined by
\begin{eqnarray}
 \quad\quad\quad I(V ) = inf\{area(\partial \Omega) : \Omega \subset M \ is \ a \ compact \ region \ with \vol(\Omega) = V\}.
\end{eqnarray}
\end{defn}

\section{Rigidity of Hawking mass for nearly round stable CMC surfaces}
It was shown in \cite{CY} that the Hawking mass is nonnegative for a stable CMC sphere. It is proved by using Hersch type test function in stability condition and the nonnegativity of scalar curvature. Since we need to study the equality case, so we prove it here for completeness.
\begin{lemma}\label{CY} \cite{CY}
Let (M,g) be a Riemnnian three manifold with scalar curvature $R(g)\geq0$, if $\Sigma$ is a stable CMC sphere in M, then $m_H(\Sigma)\ge 0$.
\end{lemma}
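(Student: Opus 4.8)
The plan is to reproduce the argument of Christodoulou--Yau \cite{CY}, which combines a conformal balancing (Hersch) trick with the Gauss equation and Gauss--Bonnet; I would set it up so that the equality case is transparent, since that is what gets used in the next section.

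Since $\Sigma$ is a topological sphere, uniformization provides a conformal diffeomorphism onto the round sphere, i.e. a diffeomorphism $\phi:\Sigma\to\mathbb{S}^2\subset\mathbb{R}^3$ with $\phi^\ast g_{\mathbb{S}^2}$ pointwise conformal to $g|_\Sigma$. Composing $\phi$ with a conformal automorphism of $\mathbb{S}^2$ (these are parametrised by the open unit ball $B^3$) and running the standard center-of-mass/Brouwer-degree argument, I may assume the three coordinate functions $f_i:=x_i\circ\phi$ all have vanishing $g$-average over $\Sigma$. Each $f_i$ is then admissible in the stability inequality for $L_\Sigma$; summing the three inequalities and using $\sum_i f_i^2\equiv 1$ gives
\begin{equation}
\int_\Sigma\big(|A|^2+Ric(n,n)\big)\le\sum_{i=1}^3\int_\Sigma|\nabla f_i|^2 .
\end{equation}
By conformal invariance of the Dirichlet energy in dimension two, the right-hand side equals $\int_{\mathbb{S}^2}\sum_i|\nabla x_i|^2 = 8\pi$, independent of the choice of $\phi$.

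It remains to bound the left-hand side from below by a multiple of the Willmore energy. Using the Gauss equation in the form $2K_\Sigma=R(g)-2Ric(n,n)+H^2-|A|^2$, one has pointwise on $\Sigma$
\begin{equation}
|A|^2+Ric(n,n)=\tfrac12|A|^2+\tfrac12 H^2+\tfrac12 R(g)-K_\Sigma\ge\tfrac34 H^2+\tfrac12 R(g)-K_\Sigma ,
\end{equation}
where the inequality uses the elementary bound $|A|^2\ge\tfrac12 H^2$ on a surface. Integrating, invoking $\int_\Sigma K_\Sigma=4\pi$ (Gauss--Bonnet for the sphere) and $R(g)\ge0$, I obtain $\tfrac34\int_\Sigma H^2-4\pi\le 8\pi$, i.e. $\int_\Sigma H^2\le16\pi$, which is precisely $m_H(\Sigma)\ge0$.

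The only step that is not a pointwise identity or an elementary estimate is the Hersch balancing of $\phi$, i.e. the existence of a conformal parametrisation for which all three coordinate integrals vanish; this is a classical topological fact about the action of the conformal group of $\mathbb{S}^2$, so I do not expect it to be a genuine obstacle. I would also record, for use in Section 3, that $m_H(\Sigma)=0$ forces every inequality above to be an equality: $R(g)\equiv0$ along $\Sigma$, $\Sigma$ is totally umbilic, and each $f_i$ saturates the stability inequality and therefore satisfies $L_\Sigma f_i=0$.
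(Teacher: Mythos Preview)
Your proof is correct and follows the same approach as the paper: Hersch balancing of a conformal map to $\mathbb{S}^2$, insertion of the coordinate functions into the stability inequality, conformal invariance of the Dirichlet energy to get $8\pi$, and the Gauss equation plus Gauss--Bonnet to conclude $\int_\Sigma H^2\le16\pi$. The only cosmetic difference is that the paper writes $|A|^2=|A^0|^2+\tfrac12 H^2$ where you use the equivalent inequality $|A|^2\ge\tfrac12 H^2$. One small caution about your closing remark: from equality in the \emph{summed} stability inequality you do get that each $f_i$ saturates, but the further claim $L_\Sigma f_i=0$ (as opposed to $L_\Sigma f_i=\text{const}$, coming from the mean-zero constraint) requires an additional argument; the paper handles this separately in Proposition~\ref{eigenvalue} via the El~Soufi--Ilias lemma rather than deducing it directly here.
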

\begin{proof}
By \cite{LY} there exist a conformal $\varphi: \Sigma\rightarrow S^2\subseteq \mathbb{R}^3$ with $\int_{\Sigma}{\varphi}=0$, we can plug these test functions in stability condition, use the fact that
\begin{eqnarray}
\int_{\Sigma}{|\nabla \varphi_i|^2}\geq\int_{\Sigma}{(|A|^2+Ric(n,n))\varphi_i^2}
\end{eqnarray}
for a surface conformal to $S^2\subseteq \mathbb{R}^3$ we have
\begin{eqnarray}
\int_{\Sigma}{|\nabla \varphi_i|^2}d\mu_{\Sigma}&=&\int_{S^2}{|\nabla x_i|^2} d\mu_{S^2}=-\int_{S^2}{x_i \triangle x_i} d\mu_{S^2} \nonumber\\
&=&2\int_{S^2}{x_i^2} d\mu_{S^2}=\frac{8}{3}\pi
\end{eqnarray}
Thus we can get
\begin{eqnarray}
8\pi\geq\int_{\Sigma}{|A|^2+Ric(n,n)}
\end{eqnarray}
By Gauss equation
\begin{eqnarray}
K_{\Sigma}=\frac{R}{2}-Ric(n,n)+\frac{1}{2}(H^2-|A|^2)
\end{eqnarray}
So we have
\begin{eqnarray}
|A|^2+Ric(n,n)&=&\frac{R}{2}-K_{\Sigma}+\frac{1}{2}(H^2+|A|^2)\nonumber\\
&=&\frac{1}{2}(R+|A^0|^2)+\frac{3}{4}H^2-K_{\Sigma}
\end{eqnarray}
where we have use that $|A|^2=|A^0|^2+\frac{1}{2}H^2$, we get

\begin{eqnarray}
8\pi\geq \frac{1}{2}\int_{\Sigma}{(R+|A^0|^2)+\frac{3}{4}\int_{\Sigma}H^2-\int_{\Sigma}K_{\Sigma}}
\end{eqnarray}
so we obtain
\begin{eqnarray}
16\pi-\int_{\Sigma}H^2\geq \frac{2}{3}\int_{\Sigma}{(R+|A^0|^2)}\geq 0
\end{eqnarray}
\end{proof}
We can get analogous results for hyperbolic case, see also \cite{Cho}:
\begin{lemma}\label{hyperbolic hawking mass}
Let (M,g) be a Riemnnian three manifold with scalar curvature $R(g)\geq-6$, if $\Sigma$ is a stable CMC sphere in M, then $m_H(\Sigma)\ge 0$.
\end{lemma}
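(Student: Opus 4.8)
The plan is to run the proof of Lemma \ref{CY} almost verbatim, the only changes being the substitution of the hypothesis $R\ge -6$ for $R\ge 0$ and the appearance of the extra constant built into the hyperbolic Hawking mass, which exactly absorbs the resulting defect. First I would apply Li--Yau balancing (\cite{LY}): since $\Sigma$ is a sphere there is a conformal map $\varphi=(\varphi_1,\varphi_2,\varphi_3)\colon\Sigma\rightarrow S^2\subset\mathbb{R}^3$ with $\int_\Sigma\varphi=0$, so each component $\varphi_i$ is admissible in the stability inequality. Conformal invariance of the Dirichlet energy in dimension two gives $\sum_i\int_\Sigma|\nabla\varphi_i|^2=\sum_i\int_{S^2}|\nabla x_i|^2=8\pi$, and since $\sum_i\varphi_i^2\equiv 1$, summing the stability inequality over $i$ yields
\begin{eqnarray}
8\pi\ \ge\ \int_\Sigma\big(|A|^2+Ric(n,n)\big).
\end{eqnarray}

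Next I would insert the Gauss equation $K_\Sigma=\tfrac{R}{2}-Ric(n,n)+\tfrac12(H^2-|A|^2)$ together with $|A|^2=|A^0|^2+\tfrac12H^2$ to rewrite $|A|^2+Ric(n,n)=\tfrac12(R+|A^0|^2)+\tfrac34H^2-K_\Sigma$, exactly as in the Euclidean case. Integrating over $\Sigma$ and using Gauss--Bonnet, $\int_\Sigma K_\Sigma=4\pi$, gives
\begin{eqnarray}
16\pi-\int_\Sigma H^2\ \ge\ \frac23\int_\Sigma\big(R+|A^0|^2\big).
\end{eqnarray}
Rather than discarding the right-hand side I would keep it and use $R\ge -6$, so that $\tfrac23\int_\Sigma(R+|A^0|^2)\ge -4|\Sigma|+\tfrac23\int_\Sigma|A^0|^2$; since $\int_\Sigma(H^2-4)=\int_\Sigma H^2-4|\Sigma|$ this rearranges to
\begin{eqnarray}
16\pi-\int_\Sigma(H^2-4)\ \ge\ \frac23\int_\Sigma|A^0|^2\ \ge\ 0,
\end{eqnarray}
which is exactly $m_H(\Sigma)\ge 0$ in the hyperbolic normalization.

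There is no genuine obstacle here; the one thing to watch is the bookkeeping of constants, namely that the term $\int_\Sigma 4=4|\Sigma|$ in the hyperbolic Hawking mass precisely cancels the negative contribution $\tfrac23\cdot 6\cdot|\Sigma|$ forced by $R\ge -6$, so the $|A^0|^2$ term survives with the correct sign. I would also record, for later use in the rigidity theorems, that equality propagates backwards through the argument: $m_H(\Sigma)=0$ forces $R\equiv -6$ and $A^0\equiv 0$ along $\Sigma$ and makes the balanced coordinate functions $\varphi_i$ equality cases of the stability inequality; this is the same computation carried out in \cite{Cho}.
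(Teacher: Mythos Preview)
Your proof is correct and is exactly the ``analogous'' argument the paper refers to: the paper does not write out a separate proof of this lemma but simply remarks that the proof of Lemma~\ref{CY} carries over with $R\ge -6$ in place of $R\ge 0$, citing \cite{Cho}. Your bookkeeping of the constants---the $-4|\Sigma|$ from $\tfrac{2}{3}\int_\Sigma R\ge -4|\Sigma|$ being absorbed by the $+4|\Sigma|$ in the hyperbolic Hawking mass---is precisely the point, and your remark on the equality case ($R\equiv -6$, $A^0\equiv 0$ on $\Sigma$, and the $\varphi_i$ realizing equality in stability) matches what the paper uses downstream in Proposition~\ref{eigenvalue}.
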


Now we start to study the stable $CMC$ surface with zero Hawking mass. First we can get a spectral characterization of it. We need the following lemma in \cite{SI} which give a optimal estimate of the second eigenvalue of Schrodinger operator. It also gives part of the rigidity of second eigenvalue which is our case for Jacobi operator on a stable $CMC$ sphere.
\begin{lemma}\cite{SI}\label{cmc rigidity11}
For any continuous function $q$ on surface $\Sigma$, we have
\begin{eqnarray}
\lambda_2(-\Delta_\Sigma+q)|\Sigma|\leq 2A_c(\Sigma)+\int_{\Sigma}{q}
\end{eqnarray}
The equality holds iff $\Sigma$ admits a conformal map into the standard $S^2$ whose components are second eigenfunctions. If $\Sigma$ is of genus zero, then the equality implies that $\Sigma$ is conformal to the standard $S^2$ in $\mathbb{R}^3$ and $q$ is given by the energy density of a Moebius transform. Where $\lambda_2$ is the second eigenvalue of $-\Delta_\Sigma+q$ in the sence of (\ref{real eigenvalue}), $A_c(\Sigma)$ is the conformal volume in \cite{LY} and for sphere $A_c(\Sigma)=4\pi$.
\end{lemma}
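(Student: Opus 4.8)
The plan is to run the Li--Yau conformal volume argument, in the Schrödinger-operator form of El Soufi--Ilias, adapted to the ``true'' second eigenvalue $\lambda_2$ of (\ref{real eigenvalue}). The idea is to produce three competitors for the constrained Rayleigh quotient of $-\Delta_\Sigma+q$ from the coordinate functions of a conformal map $\Sigma\to S^2\subset\mathbb R^3$, normalized by a Möbius transformation of the target so as to be admissible, i.e. $L^2(d\mu)$-orthogonal to the first eigenfunction $u_0$ of $-\Delta_\Sigma+q$.

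First I would invoke \cite{LY}: there is a (possibly branched) conformal map $\varphi\colon\Sigma\to S^2\subset\mathbb R^3$, and since $A_c(\Sigma)=\inf_\varphi\sup_\tau\mathrm{Area}(\tau\circ\varphi)$ over the Möbius group of $S^2$, one may choose $\varphi$ with $\sup_\tau\mathrm{Area}(\tau\circ\varphi)\le A_c(\Sigma)+\varepsilon$ (with $\varepsilon=0$ when $\Sigma$ has genus zero, where $A_c(\Sigma)=4\pi$ is attained). Next, the Hersch balancing lemma applied to the push-forward under $\varphi$ of the probability measure $\big(\int_\Sigma u_0\,d\mu\big)^{-1}u_0\,d\mu$ --- not a Dirac mass, since $u_0>0$ and $\varphi$ is non-constant --- gives a Möbius transformation $\tau$ with $\int_\Sigma(\tau\circ\varphi)_i\,u_0\,d\mu=0$ for $i=1,2,3$. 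Setting $f_i=(\tau\circ\varphi)_i\in W^{1,2}(\Sigma)$, these are admissible in (\ref{real eigenvalue}), so $\lambda_2\int_\Sigma f_i^2\le\int_\Sigma(|\nabla f_i|^2+q f_i^2)$ for each $i$; summing and using $\sum_i f_i^2\equiv1$,
\[
\lambda_2|\Sigma|\ \le\ \int_\Sigma|\nabla(\tau\circ\varphi)|^2\,d\mu+\int_\Sigma q\,d\mu\ =\ 2\,\mathrm{Area}(\tau\circ\varphi)+\int_\Sigma q\,d\mu\ \le\ 2\big(A_c(\Sigma)+\varepsilon\big)+\int_\Sigma q\,d\mu,
\]
where the middle equality holds because $\tau\circ\varphi$ is conformal with $\dim\Sigma=2$. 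Letting $\varepsilon\to0$ gives the asserted bound; only continuity of $q$ and $W^{1,2}$-regularity of the $f_i$ are used.

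For the equality case, equality forces every inequality above to be an equality. Hence each non-zero $f_i$ achieves the infimum in (\ref{real eigenvalue}) and, by a Lagrange-multiplier argument using $\int_\Sigma f_i u_0\,d\mu=0$, is a second eigenfunction of $-\Delta_\Sigma+q$: the components of the conformal map $\tau\circ\varphi$ are second eigenfunctions (conversely, the components of such a conformal map, being $\lambda_2$-eigenfunctions, are $L^2(d\mu)$-orthogonal to $u_0$, and reversing the computation gives equality). When $\Sigma$ has genus zero, equality additionally forces $\mathrm{Area}(\tau\circ\varphi)=4\pi$, so by uniformization and Riemann--Hurwitz $\tau\circ\varphi$ is an unbranched degree-one conformal map, i.e. a Möbius transformation of the round $S^2$; writing $\varphi^*g_{S^2}=e^{2u}g_\Sigma$ and using $\Delta_{S^2}x_i=-2x_i$ gives $-\Delta_\Sigma f_i=2e^{2u}f_i$, which combined with $-\Delta_\Sigma f_i+q f_i=\lambda_2 f_i$ and $\sum_i f_i^2\equiv1$ forces $q=\lambda_2-2e^{2u}$, i.e. $q$ is, up to the constant $\lambda_2$, the energy density of the Möbius map $\tau\circ\varphi$.

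The inequality is routine; the work is in the equality case. In genus zero --- the only case needed in this paper --- the equality propagates cleanly because the conformal volume is attained, and the conclusion then follows from uniformization together with the eigenfunction equations as above; the only nuisance is bookkeeping with branch points of $\varphi$ (harmless, since only $W^{1,2}$-regularity of the test functions is used). The genuinely delicate point, which is why the sharp rigidity is phrased for genus zero, is the general-genus equality case: there $A_c(\Sigma)$ need not be attained, so one must run the argument along an area-minimizing sequence of conformal maps and extract a limiting harmonic (hence minimal) conformal map realizing equality --- a compactness issue rather than a computational one.
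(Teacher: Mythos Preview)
The paper does not give its own proof of this lemma; it is stated with a citation to \cite{SI} and used as a black box. Your argument is the standard El~Soufi--Ilias proof, and it is correct: Hersch balancing against the measure $u_0\,d\mu$ (rather than $d\mu$) is exactly the modification needed to land in the admissible class for $\lambda_2$ as defined in~(\ref{real eigenvalue}), and the rest is the Li--Yau computation. Your handling of the equality case is also sound---in particular the Lagrange-multiplier step showing that a minimizer of the constrained Rayleigh quotient is a genuine eigenfunction (the multiplier on $u_0$ vanishes because $-\Delta+q$ is symmetric and $u_0$ is an eigenfunction), and the genus-zero conclusion via $\mathrm{Area}(\tau\circ\varphi)=4\pi$ forcing degree one. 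One cosmetic remark: in genus zero you may simply take $\varphi$ to be the uniformizing map, which is already unbranched, so the aside about branch points is unnecessary there; it only matters in higher genus, which as you note is not used in this paper.
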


By the above lemma we can have the following characterization of zero Hawking mass stable $CMC$ spheres.
\begin{pro}\label{eigenvalue}
Let (M,g) be a complete Riemnnian three manifold with scalar curvature $R(g)\geq0(resp. R(g)\geq-6)$, if $\Sigma$ is a stable CMC sphere with $m_H(\Sigma)= 0$ and area $|\Sigma|=4\pi$. Then the second eigenvalue $\lambda_2(-\Delta_{\Sigma}+K_{\Sigma})=3$, with three eigenfunctions $\varphi_1$,$\varphi_2$,$\varphi_3$, $\int_{\Sigma}{\varphi_i}=0$, and $\sum_{i=1}^{3}{\varphi_i}^2=1$. In particular, $|\nabla \varphi|^2=3-K_{\Sigma}$ which is independent of eigenfuncitons.
\end{pro}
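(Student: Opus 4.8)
The plan is to sandwich $\lambda_2(-\Delta_\Sigma+K_\Sigma)$ between $3$ and $3$ --- the upper bound from the sharp spectral estimate in Lemma \ref{cmc rigidity11}, the lower bound from stability --- and then to read off the eigenfunctions from the two equality cases that are thereby forced. First I would collect the consequences of $m_H(\Sigma)=0$ and $|\Sigma|=4\pi$: the definition of Hawking mass gives $\int_\Sigma H^2=16\pi$ when $R(g)\ge 0$ (resp. $\int_\Sigma H^2=32\pi$ when $R(g)\ge -6$), so by the CMC condition $H^2\equiv 4$ (resp. $\equiv 8$). Rerunning the proof of Lemma \ref{CY} (resp. Lemma \ref{hyperbolic hawking mass}) with $m_H(\Sigma)=0$ forces every inequality there to be an equality, and its last line then reads $\int_\Sigma(R+|A^0|^2)=0$ (resp. $\int_\Sigma(R+6+|A^0|^2)=0$), so $R|_\Sigma\equiv 0$ (resp. $\equiv -6$) and $\Sigma$ is totally umbilic, $A^0\equiv 0$. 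Feeding this and the value of $H^2$ into the Gauss equation $K_\Sigma=\tfrac12 R-Ric(n,n)+\tfrac12(H^2-|A|^2)$ and using $|A|^2=|A^0|^2+\tfrac12 H^2$, one obtains the pointwise operator identity
\[
-\Delta_\Sigma+K_\Sigma=L_\Sigma+\Big(\tfrac12 R+\tfrac12|A^0|^2+\tfrac34 H^2\Big)=L_\Sigma+3,
\]
valid in both settings because $\tfrac12 R+\tfrac34 H^2=3$ whether $(R,H^2)=(0,4)$ or $(-6,8)$.

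For the upper bound I apply Lemma \ref{cmc rigidity11} with $q=K_\Sigma$: since $|\Sigma|=4\pi$, $A_c(\Sigma)=4\pi$ for a sphere, and $\int_\Sigma K_\Sigma=4\pi$ by Gauss--Bonnet, it reads $4\pi\,\lambda_2(-\Delta_\Sigma+K_\Sigma)\le 8\pi+4\pi$, i.e. $\lambda_2(-\Delta_\Sigma+K_\Sigma)\le 3$. For the lower bound, the identity above and the stability inequality give $\lambda_2(-\Delta_\Sigma+K_\Sigma)=\lambda_2(L_\Sigma)+3\ge \Lambda_2(L_\Sigma)+3\ge 3$. Hence $\lambda_2(-\Delta_\Sigma+K_\Sigma)=3$ and $\Lambda_2(L_\Sigma)=\lambda_2(L_\Sigma)=0$.

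Both equality cases now fire. Equality in Lemma \ref{cmc rigidity11} together with genus zero yields a conformal map $\Phi=(\Phi_1,\Phi_2,\Phi_3)\colon\Sigma\to S^2\subset\mathbb{R}^3$ whose components are second eigenfunctions of $-\Delta_\Sigma+K_\Sigma$; in particular $\sum_i\Phi_i^2=1$. Separately, for the balanced Li--Yau map $\varphi$ used in the proof of Lemma \ref{CY} (with $\int_\Sigma\varphi=0$ and $\sum_i\varphi_i^2=1$), the now-forced term-by-term equality in the stability inequality reads $\int_\Sigma\varphi_iL_\Sigma\varphi_i=0$; since $\Lambda_2(L_\Sigma)=0$, each normalized $\varphi_i$ minimizes the mean-zero variational problem, so by Lagrange multipliers $L_\Sigma\varphi_i=c_i$ for constants $c_i$, i.e. $(-\Delta_\Sigma+K_\Sigma)\varphi_i=3\varphi_i+c_i$. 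Pairing this with $\Phi_j$ and using self-adjointness gives $c_i\int_\Sigma\Phi_j=0$ for all $i,j$. Thus if every $c_i=0$, then $\varphi$ is already a triple of eigenfunctions with $\int_\Sigma\varphi_i=0$ and $\sum_i\varphi_i^2=1$; and if some $c_{i_0}\ne 0$, then $\int_\Sigma\Phi_j=0$ for every $j$ and $\Phi$ is such a triple. Writing $\varphi_1,\varphi_2,\varphi_3$ for the chosen triple, we have $-\Delta_\Sigma\varphi_i=(3-K_\Sigma)\varphi_i$; applying $\Delta_\Sigma$ to $\sum_i\varphi_i^2=1$ and substituting yields $\sum_i|\nabla\varphi_i|^2=(3-K_\Sigma)\sum_i\varphi_i^2=3-K_\Sigma$, which visibly does not depend on the eigenbasis chosen.

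I expect the main obstacle to be precisely this reconciliation in the last step: a priori the extremal conformal map supplied by Lemma \ref{cmc rigidity11} need not be balanced, and the balanced Li--Yau map need not consist of eigenfunctions, so one has to play the two equality cases off each other via the relation $(-\Delta_\Sigma+K_\Sigma)\varphi_i=3\varphi_i+c_i$. A secondary point is checking that $m_H(\Sigma)=0$ really forces the \emph{term-by-term} stability equalities: this holds because the summed stability inequality $\sum_i\int_\Sigma|\nabla\varphi_i|^2\ge\int_\Sigma(|A|^2+Ric(n,n))$ is equivalent, via Gauss--Bonnet and $|A|^2=|A^0|^2+\tfrac12 H^2$, to $16\pi-\int_\Sigma H^2\ge\tfrac23\int_\Sigma(R+|A^0|^2)$, whose two sides both vanish. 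The hyperbolic statement needs no new idea, only the bookkeeping $H^2\equiv 8$, $R|_\Sigma\equiv -6$ recorded above, which still gives $\tfrac12 R+\tfrac34 H^2=3$ and hence the same identity $-\Delta_\Sigma+K_\Sigma=L_\Sigma+3$.
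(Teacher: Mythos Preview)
Your argument is correct and follows the same sandwich route as the paper: derive $L_\Sigma=-\Delta_\Sigma+K_\Sigma-3$ from $m_H(\Sigma)=0$, then squeeze $0\le\Lambda_2(L_\Sigma)\le\lambda_2(L_\Sigma)\le 0$ using stability on one side and Lemma~\ref{cmc rigidity11} with $q=K_\Sigma$ on the other, and finally read off $|\nabla\varphi|^2=3-K_\Sigma$ from $\Delta_\Sigma|\varphi|^2=0$.

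The one place you go beyond the paper is your reconciliation step showing that the eigenfunctions can be taken with $\int_\Sigma\varphi_i=0$: the paper simply asserts this as part of the equality case, whereas you observe that the El Soufi--Ilias extremal map is a priori only $u_0$-orthogonal, not mean-zero, and then resolve this by playing the balanced Li--Yau map $\varphi$ (mean-zero, satisfies $L_\Sigma\varphi_i=c_i$ by the forced term-by-term equality) against the extremal map $\Phi$ (genuine eigenfunctions) via the pairing $0=\int_\Sigma\varphi_i L_\Sigma\Phi_j=c_i\int_\Sigma\Phi_j$. This is a genuine and correct clarification of a point the paper leaves implicit; otherwise the two proofs coincide.
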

\begin{proof}
From the above proof of Lemma \ref{CY} we can see if $m_H(\Sigma)= 0$ on $\Sigma$, we have $\int_{\Sigma}H^2=16\pi(resp. \int_{\Sigma}(H^2-4)=16\pi)$, $R=0(resp. R=-6),\ A^0=0$ on $\Sigma$. The area $|\Sigma|=4\pi$, then $H=2(resp. H=2\sqrt{2})$, the Jacobi operator become
\begin{eqnarray}
L_{\Sigma}=-\Delta_{\Sigma}+K_{\Sigma}-3
\end{eqnarray}
By the stability of $\Sigma$ and Lemma \ref{cmc rigidity11}, we have
\begin{eqnarray}
\quad 0\leq 4\pi \Lambda_2(L_{\Sigma})\leq 4\pi \lambda_2(L_{\Sigma})\leq 8\pi+\int_{\Sigma}(K_\Sigma-3)=0
\end{eqnarray}
so all the equality holds, in particular
\begin{eqnarray}
\lambda_2(-\Delta_\Sigma+K_\Sigma)=3
\end{eqnarray}
with three eigenfunctions $\varphi_1$,$\varphi_2$,$\varphi_3$, $\int_{\Sigma}{\varphi_i}=0$, and $\sum_{i=1}^{3}{\varphi_i}^2=1$.
\begin{eqnarray}\label{equality}
-\Delta_{\Sigma}\varphi+K_{\Sigma}\varphi-3\varphi=0
\end{eqnarray}
By $|\varphi|^2=\sum_{i=1}^{3}{\varphi_i}^2=1$, we have
\begin{eqnarray}
0=\Delta_{\Sigma}|\varphi|^2=2\varphi\Delta_{\Sigma}\varphi+2|\nabla \varphi|^2
\end{eqnarray}
Take inner product of $\varphi$ with (\ref{equality}), we get
\begin{eqnarray}\label{equality1}
|\nabla \varphi|^2=3-K_{\Sigma}
\end{eqnarray}
\end{proof}
\begin{remark}
We see from the above lemma that the second eigenvalue and eigenfunctions of Schrodinger operator $-\Delta_{\Sigma}+K_{\Sigma}$ equals with standard $S^2$, we expect that the metric is isometric to standard metric on $S^2$.
\end{remark}

In the following, we will always use $\Sigma$ to denote the stable $CMC$ surface with zero Hawking mass without extra explanation. Let $\varphi: \Sigma\rightarrow S^2\subseteq \mathbb{R}^3$ be the conformal map in Proposition \ref{eigenvalue} with $\int_{\Sigma}{\varphi}=0$. Denote the metric on $\Sigma$ is $g=e^u g_0$, $g_0$ being the standard metric on $S^2$. By definition of conformal map $\varphi$, we have
\begin{eqnarray}\label{equality 11}
 e^{-u}=\frac {1}{2}|\nabla \varphi|^2
\end{eqnarray}
The standard formula for Gauss curvature under a conformal change of the metric give
\begin{eqnarray}\label{equality 12}
K_\Sigma = e^{-u} (1 -\frac 12 \Delta_{g_0} u)
\end{eqnarray}
so (\ref{equality1}) gives
\begin{eqnarray}\label{main equation}
\Delta_{g_0} u = 6-6e^u
\end{eqnarray}
Also the volume preserving variation implies
\begin{eqnarray}\label{additional equation}
\int_{S^2}{x_ie^u}=0
\end{eqnarray}
So for this stable $CMC$ surface with zero Hawking mass $\Sigma$
\begin{eqnarray}
K_\Sigma-1 = e^{-u} (1 -3+3e^u)-1=2(1-e^{-u})
\end{eqnarray}
This means that if $u$ is $C^0$ close to 0, then $K_\Sigma$ is $C^0$ close to 1, which implies $\Sigma$ is nearly round. If we can prove (\ref{main equation})(\ref{additional equation}) admit only zero solution, then the stable $CMC$ surface with vanish Hawking mass isometric to standard $S^2$.

Equation of (\ref{main equation}) type has been studied in various aspects, such as prescribed Gaussian curvature \cite{KW}, mean filed model, Chern Simons Higgs model. This kind of equation may have bifurcation when approach the eigenvalues of $S^2$, so it may lost compactness. Ding, Jost, Li, Wang \cite{DJLW}\cite{DJLW1} have studied the equation at the first eigenvalue, Li \cite{Li} has initiated study of the existence of solutions by computing the Leray-Schauder topological degree, Lin compute the degree on $S^2$ in \cite{Lin} and surface of any genus \cite{CL}. But there is few work on the uniqueness of this kind of equation at second eigenvalue of $S^2$. In fact, the bifurcation will occur after the first eigenvalue, it is hard to guarantee the uniqueness in global. But we can get local uniqueness of the constant solution for (\ref{main equation}). That's why we put the nearly round condition in our results. We use the Lyapunov-Schmidt decomposition as in \cite{NT} to estimate the kernel of $\Delta_{g_0}+6$ and orthogonal part separately.

\begin{lemma}\label{uniqueness}
There exist $\delta_0>0$, such that if a solution of (\ref{main equation}) satisfies $sup|u|<\delta_0$, then $u\equiv0$.

\end{lemma}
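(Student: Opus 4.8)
The plan is to use a Lyapunov--Schmidt decomposition adapted to the second eigenvalue of $-\Delta_{g_0}$ on $S^2$. Recall that the linearization of (\ref{main equation}) at $u\equiv 0$ is $\Delta_{g_0}+6$, whose kernel is exactly the space $V$ of first-order spherical harmonics (restrictions of linear functions $x_1,x_2,x_3$), since $6$ is the second eigenvalue. So I would write $u = v + w$, where $v \in V$ is the $L^2(g_0)$-projection of $u$ onto the kernel and $w$ lies in the orthogonal complement $V^\perp$. Projecting (\ref{main equation}) onto $V^\perp$ gives an equation of the form $(\Delta_{g_0}+6)w = \Pi_{V^\perp}\bigl(6 - 6e^{v+w} + 6(v+w)\bigr) = \Pi_{V^\perp}\bigl(-6(e^{v+w}-1-(v+w))\bigr)$, and since $\Delta_{g_0}+6$ is invertible on $V^\perp$ with a bounded inverse (the spectral gap between $6$ and the next eigenvalue $12$, and away from $0$ and $2$), a contraction-mapping argument on a small ball shows that for $\|v\|$ small there is a unique small solution $w = w(v)$, depending smoothly on $v$, with $\|w(v)\| = O(\|v\|^2)$. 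This is the routine part; the quadratic estimate on $w$ comes from the fact that the nonlinearity $e^t - 1 - t$ is quadratic near $t=0$.

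Next I would substitute back and project onto the kernel $V$ itself, obtaining the bifurcation (reduced) equation $\Phi(v) := \Pi_V\bigl(6 - 6e^{v+w(v)}\bigr) = 0$ for $v \in V \cong \mathbb{R}^3$. The task is to show $\Phi(v)=0$ forces $v=0$ for $\|v\|$ small. Expanding, $\Phi(v) = -6\,\Pi_V\bigl(v + w(v) + \tfrac12(v+w(v))^2 + \cdots\bigr) = -6\,\Pi_V\bigl(\tfrac12 v^2\bigr) + O(\|v\|^3)$, using $\Pi_V v = v$ is cancelled only against... wait---here is the crucial point: the linear term $\Pi_V(v) = v$ does \emph{not} vanish. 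But this is where the extra constraint (\ref{additional equation}) enters. The balancing condition $\int_{S^2} x_i e^u = 0$ says precisely that the ``center of mass'' of the conformal factor is zero, which is exactly the obstruction that kills the linear term: writing $e^u = 1 + u + \cdots$, the constraint reads $\int_{S^2} x_i(u + \tfrac12 u^2 + \cdots) = 0$, i.e. $\Pi_V(u) = -\Pi_V(\tfrac12 u^2) + \cdots$, equivalently $v = \Pi_V u = O(\|u\|^2)$. So in fact (\ref{additional equation}) forces $v$ to be \emph{already} quadratically small in $u$, and one should treat $\|w\|$ and $\|v\|$ on the same footing via the system consisting of the $V^\perp$-equation together with the constraint-rewritten $V$-equation. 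Feeding this back, the reduced equation becomes $\Phi(v) = 0$ where the effective linear part is now genuinely zero, and the leading term is quadratic with a definite (nondegenerate) structure coming from $\Pi_V(v^2)$ paired against the projection of the constraint's quadratic term.

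Concretely I would estimate $\Pi_V\bigl((x\cdot a)^2\bigr)$ for $a \in \mathbb{R}^3$: since $(x\cdot a)^2 = \tfrac13|a|^2 + (\text{second-order harmonic})$, its projection onto $V$ is zero, so the genuinely obstructing term is the cross term $\Pi_V\bigl(v\cdot w(v)\bigr)$ together with $\Pi_V(v^3)$-type terms and the quadratic correction from the constraint. This is where Appendix \ref{sphere rigidity} (``computations for the square of second order spherical harmonics'') is used: one needs the precise value of $\Pi_V$ applied to products of low-order spherical harmonics to see that the reduced map $\Phi$ has an isolated zero at $v=0$. The main obstacle is exactly this: ensuring the reduced finite-dimensional equation is nondegenerate, i.e. that the $O(\|v\|^2)$ (or appropriate order) term in $\Phi$ has $0$ as its only small root, rather than a curve or cone of solutions; this requires the explicit spherical-harmonic bookkeeping and is the reason the proof cannot be purely soft.

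I would then conclude: choosing $\delta_0$ small enough that all the contraction and Taylor estimates are valid on $\{\sup|u| < \delta_0\}$ (passing between $C^0$ and Sobolev norms via elliptic estimates on (\ref{main equation}), since $u$ solves a nice semilinear equation and is automatically smooth), any solution $u$ of (\ref{main equation})--(\ref{additional equation}) with $\sup|u| < \delta_0$ decomposes as $u = v + w(v)$ with $v$ solving $\Phi(v)=0$, hence $v = 0$, hence $w = w(0) = 0$, hence $u \equiv 0$. A remark: strictly, Lemma \ref{uniqueness} as stated only invokes (\ref{main equation}), so one should check whether the constraint (\ref{additional equation}) is truly needed or whether the $C^0$-smallness of $u$ combined with the bifurcation analysis already suffices; I expect the clean argument does use (\ref{additional equation}) (as the surrounding text suggests, ``we believe the equation has only zero solution with the integral restriction''), and the proof should say so explicitly or else supply the purely scalar argument that the only small solution of $\Delta_{g_0}u = 6 - 6e^u$ with no constraint is also $u\equiv 0$ because $\int_{S^2}(6-6e^u) = 0$ forces $\int e^u = 4\pi$, which combined with smallness and the maximum principle pins down $u$.
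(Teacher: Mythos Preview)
Your proposal rests on a misidentification of the kernel of the linearization. The eigenvalues of $-\Delta_{g_0}$ on $S^2$ are $l(l+1)$ for $l=0,1,2,\ldots$, namely $0,2,6,12,\ldots$; hence $\ker(\Delta_{g_0}+6)$ is the eigenspace for eigenvalue $6$, which is the \emph{five}-dimensional space $E_2=\operatorname{span}\{Y_{2,-2},\ldots,Y_{2,2}\}$ of second-order spherical harmonics, not the three first-order harmonics $x_1,x_2,x_3$ (those satisfy $-\Delta_{g_0}x_i=2x_i$). This error propagates through everything that follows: the balancing condition (\ref{additional equation}) is an orthogonality against the $l=1$ harmonics and says nothing about the $E_2$-component of $u$ (indeed every element of $E_2$ is even, so it already integrates to zero against each $x_i$). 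Consequently your mechanism for ``killing the linear term'' via (\ref{additional equation}) cannot work, and the computations you anticipate in Appendix~\ref{sphere rigidity} in fact concern products of \emph{second}-order harmonics, not first-order ones.

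The paper's argument does not invoke (\ref{additional equation}) at all. With the correct decomposition $u=u_1+u_2$, $u_2\in E_2$, $u_1\in E_2^\perp$, the $E_2^\perp$-equation gives $|u_1|_{L^\infty}\le C\delta^2$ as you would expect. For the kernel part one expands to quadratic order, $\Delta_{g_0}u+6u+3u^2=O(|u|^3)$, and projects onto $E_2$: since $(\Delta_{g_0}+6)u$ has zero $E_2$-component, this yields $|P_2(u_2^2)|_{L^2}\le C\delta^3$. The decisive nondegeneracy input is Lemma~\ref{projection}, the exact identity $|P_2(u_2^2)|_{L^2}=\frac{1}{7}\sqrt{5/\pi}\,|u_2|_{L^2}^2$ for every $u_2\in E_2$; this is what the spherical-harmonic product tables in the appendix are for. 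It gives $|u_2|_{L^\infty}\le C|u_2|_{L^2}\le C\delta^{3/2}$, hence $\sup|u|\le C\delta^{3/2}$, and one iterates the improvement $\delta\mapsto C\delta^{3/2}$ to force $u\equiv 0$. So the reduced problem is handled by a quantitative bootstrap rather than a one-shot bifurcation solve, and the needed ``definiteness'' is the equality in Lemma~\ref{projection}, not the volume-centering constraint you were relying on.
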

\begin{proof}
In the following, the constant $C$ is universal, which may differ from line to line.
Denote $E_2=ker\{\Delta_{g_0}+6\}$, which is the second eigenspace of $-\Delta_{g_0}$ on standard $S^2$. It is well know that $E_2=span\{Y_{2,-2},Y_{2,-1},Y_{2,0},Y_{2,-1},Y_{2,2}\}$(see appendix below). Let $P_2$ be the projection operator on $E_2$.
Consider the decomposition $u=u_1+u_2$, where $u_1\in E_2^\perp$, and $u_2\in E_2$. Then
\begin{eqnarray}\label{main equation1}
\Delta_{g_0} u_1+6u_1 = 6(1+u-e^u)
\end{eqnarray}
\begin{eqnarray}\label{main equation2}
\Delta_{g_0} u_2+6u_2=0
\end{eqnarray}
As $(\Delta_{g_0}+6)^{-1}$ is bounded from $L^2$ to $W^{2,2}$ on $E_2^\perp$, we have
\begin{eqnarray}\label{eqn1}
|u_1|_{W^{2,2}}\leq C|1+u-e^u|_{L^2}
\end{eqnarray}
By the assumption, we can assume
\begin{eqnarray}\label{eqn2}
\sup|u|\leq \delta < 1
\end{eqnarray}

Then from (\ref{eqn1}) and Soblev embedding, we have
\begin{eqnarray}\label{eqn3}
|u_1|_{L^\infty}\leq C|u^2|_{L^2}\leq C\delta^2
\end{eqnarray}
Also from equation (\ref{main equation}), we know
\begin{eqnarray}
|\Delta_{g_0} u+6u+3u^2|_{L^2} = 6|1+u+\frac{1}{2}u^2-e^u|_{L^2}\leq C|u^3|_{L^2}\leq C\delta^3
\end{eqnarray}
By (\ref{eqn3}), we can get
$$ |u_1^2|_{L^2} \leq C \delta^4$$
By the ecomposition of $u_2=u-u_1$ we have
\begin{eqnarray}
|\Delta_{g_0} u+6u+3u_2^2|_{L^2} \leq 2|u^3|_{L^2}+6|u_1u|_{L^2}+3|u_1^2|_{L^2}\leq C\delta^3
\end{eqnarray}
In order to get the estimate of $u_2$, we project above equation to $E_2$, then we have
\begin{eqnarray}\label{eqn111}
|P_2u_2^2|_{L^2}\leq C\delta^3
\end{eqnarray}
By Lemma \ref{projection} below and (\ref{eqn111})we have
\begin{eqnarray}\label{eqn4}
|u_2|_{L^\infty}\leq C|u_2|_{L^2}\leq C\delta^{\frac{3}{2}}
\end{eqnarray}
Combine (\ref{eqn3})(\ref{eqn4}), we improve the initial assumption (\ref{eqn2})
\begin{eqnarray}\label{eqn5}
sup|u|\leq C|u|_{L^2}<C\delta^{\frac{3}{2}}
\end{eqnarray}
Take $\delta_0=\frac{1}{2}C^{-2}$ and iterate the procedure, we can get
\begin{eqnarray}\label{eqn6}
sup|u|\leq C_0|u|_{L^2}<C^{-2}(C^{2}\delta_0)^{(\frac{3}{2})^k}=C^{-2}(\frac{1}{2})^{(\frac{3}{2})^k}
\end{eqnarray}
and let $k\rightarrow\infty$, we get the desired result.
\end{proof}

\begin{lemma}\label{projection}
$\forall u_2\in E_2$, there have
\begin{eqnarray}
|P_2u_2^2|_{L^2}=\frac{1}{7}\sqrt{\frac{5}{\pi}}|u_2|^2_{L^2}
\end{eqnarray}
\end{lemma}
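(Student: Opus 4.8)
\textbf{Proof proposal for Lemma \ref{projection}.}

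The plan is to reduce the statement to an explicit computation on the standard sphere using the basis of degree-two spherical harmonics $Y_{2,m}$, $m=-2,\dots,2$. First I would write an arbitrary $u_2\in E_2$ as $u_2=\sum_{m=-2}^{2} a_m Y_{2,m}$, so that $|u_2|_{L^2}^2=\sum_m a_m^2$ (using the $L^2$-normalization of the $Y_{2,m}$). The square $u_2^2$ is a polynomial of degree four in the ambient coordinates restricted to $S^2$, hence lies in $E_0\oplus E_2\oplus E_4$; the projection $P_2 u_2^2$ keeps only the $E_2$-component. The key point is that the bilinear map $(v,w)\mapsto P_2(vw)$ on $E_2\times E_2$ is $SO(3)$-equivariant, so by Schur's lemma its "norm" is controlled by a single universal constant, and it suffices to evaluate $|P_2 u_2^2|_{L^2}$ for one convenient choice of $u_2$, say $u_2=Y_{2,0}$, which is proportional to $3z^2-1$ in Cartesian coordinates.

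The main computational step is therefore to expand $(3z^2-1)^2=9z^4-6z^2+1$ on $S^2$ and extract its $E_2$-part. Writing $z^4$ and $z^2$ in terms of $P_0$, $P_2$, $P_4$ using the standard identities $z^2=\tfrac13+\tfrac23 P_2(\,\cdot\,)$-type decompositions (concretely, $z^2-\tfrac13\in E_2$ and $z^4$ decomposed via Legendre polynomials $P_2(z),P_4(z)$), one finds $P_2\big((3z^2-1)^2\big)=c\,(3z^2-1)$ for an explicit rational-times-$\sqrt{5/\pi}$ constant $c$; combined with $|3z^2-1|_{L^2}$ and the normalization of $Y_{2,0}$ this yields the ratio $\tfrac17\sqrt{5/\pi}$. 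These computations are exactly the ones referenced in Appendix \ref{sphere rigidity} ("computations for the square of second order spherical harmonics"), so I would carry them out there and quote the result here. The one subtlety to check carefully is that the constant is genuinely independent of which $u_2\in E_2$ is chosen — i.e. that $P_2(u_2^2)$ is always a scalar multiple of $u_2$ with the same scalar — rather than merely that the $L^2$-norms match for the special element $Y_{2,0}$.

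The main obstacle, and the reason this is stated as a separate lemma, is precisely this equivariance/uniformity claim: a priori $P_2(u_2^2)$ need not be proportional to $u_2$, and even its norm could depend on the orbit of $u_2$ under $SO(3)$ and not just on $|u_2|_{L^2}$. The cleanest way around it is the representation-theoretic observation above: $E_2$ is the (unique, up to iso) irreducible $SO(3)$-representation of dimension five, the symmetric square $\mathrm{Sym}^2 E_2$ decomposes as $E_0\oplus E_2\oplus E_4$ with each summand appearing once, so the projection onto the $E_2$-summand is, up to a scalar, the unique equivariant quadratic map $E_2\to E_2$; hence $|P_2(u_2^2)|_{L^2}/|u_2|_{L^2}^2$ is a constant, and evaluating at $u_2=Y_{2,0}$ pins it down. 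Alternatively, one can avoid representation theory entirely and verify the identity by a direct but longer computation: expand $u_2=\sum a_m Y_{2,m}$, compute all the Clebsch–Gordan-type coefficients $\langle P_2(Y_{2,m}Y_{2,m'}),Y_{2,m''}\rangle$, and check that $\sum_{m''}|\langle P_2 u_2^2,Y_{2,m''}\rangle|^2=\big(\tfrac17\sqrt{5/\pi}\big)^2(\sum_m a_m^2)^2$; this is the brute-force route I would relegate to the appendix.
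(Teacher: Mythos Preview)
Your representation-theoretic shortcut has a genuine gap. You correctly observe that $\mathrm{Sym}^2 E_2\cong E_0\oplus E_2\oplus E_4$ with each summand of multiplicity one, so the equivariant quadratic map $u_2\mapsto P_2(u_2^2)$ is unique up to scalar. But the ``hence $|P_2(u_2^2)|_{L^2}/|u_2|_{L^2}^2$ is a constant'' does not follow from this. The group $SO(3)$ is three-dimensional while the unit sphere in $E_2$ is four-dimensional, so the action is not transitive on that sphere; there is a one-parameter family of orbits (identifying $E_2$ with traceless symmetric $3\times3$ matrices, the orbits are distinguished by $\det A$). An equivariant quadratic map can perfectly well have non-constant norm ratio across different orbits. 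Relatedly, your parenthetical guess that ``$P_2(u_2^2)$ is always a scalar multiple of $u_2$'' is actually false: for instance $P_2(Y_{2,1}^2)$ is a combination of $Y_{2,0}$ and $Y_{2,2}$, not a multiple of $Y_{2,1}$.

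What makes the lemma true is a dimension-three coincidence you have not invoked: under the identification with traceless symmetric matrices one has $P_2(u_2^2)\propto A^2-\tfrac13\mathrm{tr}(A^2)I$, and for $3\times3$ traceless symmetric $A$ the Cayley--Hamilton theorem gives $\mathrm{tr}(A^4)=\tfrac12(\mathrm{tr}(A^2))^2$, which is exactly what forces $|P_2(u_2^2)|_{L^2}^2$ to be a pure multiple of $|u_2|_{L^2}^4$. The paper does not go this route either; it takes precisely your ``brute-force'' alternative: write $u_2=\sum_{m}\lambda_m Y_{2,m}$, use the product tables for $Y_{2,i}Y_{2,j}$ from the appendix to compute the five $E_2$-coefficients of $u_2^2$ explicitly, and then verify by direct expansion that the sum of their squares collapses to $(\sum_m\lambda_m^2)^2$. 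So your fallback plan is correct and coincides with the paper's proof; the Schur-type reduction to a single test vector, as written, is not.
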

\begin{proof}
Let
\begin{eqnarray}
u_2=\sum_{i=-2}^{2}\lambda_iY_{2,i}
\end{eqnarray}
where $Y_{2,i}$ are the second order spherical harmonics(see Appendix \ref{shperical harmonics}). By computations and project $u_2^2$ to $E_2$, we have
\begin{eqnarray}
P_2u_2^2=\frac{1}{14}\sqrt{\frac{5}{\pi}}[2(\lambda_0^2-\lambda_{-2}^2-\lambda_{2}^2)+\lambda_1^2+\lambda_{-1}^2]Y_{2,0}
\\+\frac{1}{7}\sqrt{\frac{5}{\pi}}(\sqrt{3}\lambda_{-1}\lambda_1-2\lambda_{-2}\lambda_0)Y_{2,-2}\nonumber
\\+\frac{1}{14}\sqrt{\frac{5}{\pi}}[\sqrt{3}(\lambda_1^2-\lambda_{-1}^2)-4\lambda_{0}\lambda_2]Y_{2,2}\nonumber
\\+\frac{1}{7}\sqrt{\frac{5}{\pi}}[\lambda_{-1}\lambda_{0}+\sqrt{3}(\lambda_{-2}\lambda_{1}-\lambda_{-1}\lambda_2)]Y_{2,-1}\nonumber
\\+\frac{1}{7}\sqrt{\frac{5}{\pi}}[\lambda_{0}\lambda_1+\sqrt{3}(\lambda_{-2}\lambda_{-1}+\lambda_{1}\lambda_2)]Y_{2,1}\nonumber
\end{eqnarray}
Thus
\begin{eqnarray}\label{eqn11}
|P_2u_2^2|^2_{L^2}=(\frac{1}{7}\sqrt{\frac{5}{\pi}})^2\{\frac{1}{4}(2\lambda_0^2-2\lambda_{-2}^2-2\lambda_{2}^2+\lambda_{-1}^2+\lambda_1^2)^2\\
+(\sqrt{3}\lambda_{-1}\lambda_1-2\lambda_{-2}\lambda_0)^2
+\frac{1}{4}[\sqrt{3}(\lambda_1^2-\lambda_{-1}^2)-4\lambda_{0}\lambda_2]^2\nonumber\\
+[\lambda_{-1}\lambda_{0}+\sqrt{3}(\lambda_{-2}\lambda_{1}-\lambda_{-1}\lambda_2)]^2\nonumber
\\+[\lambda_{0}\lambda_1+\sqrt{3}(\lambda_{-2}\lambda_{-1}+\lambda_{1}\lambda_2)]^2\}\nonumber\\
=(\frac{1}{7}\sqrt{\frac{5}{\pi}})^2(\sum_{i=-2}^{2}\lambda_i^2)^2=(\frac{1}{7}\sqrt{\frac{5}{\pi}})^2|u_2|^2_{L^2}\nonumber
\end{eqnarray}

\end{proof}

The following rigidity result is some kind of  positive mass theorem in compact case(see \cite{Miao}, \cite{ST}, and \cite{HW}).
\begin{lemma}\label{sphere rigidity}
Let (M, g) be a compact orientable Riemannian 3-manifold with scalar curvature $R(g)\geq0$ and $\partial M$ isometric to round $S^2$ with mean curvature $H=2$. Then (M, g) is isometric to the unit ball in ($\mathbb{R}^3$, $\delta$).
\end{lemma}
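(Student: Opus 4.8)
The plan is to read Lemma~\ref{sphere rigidity} as the equality case of the Shi--Tam positive mass theorem for compact manifolds with boundary \cite{ST}, and, for a more self-contained route, to glue a flat exterior onto $M$ and invoke the rigidity of the positive mass theorem for asymptotically flat manifolds with corners along a hypersurface \cite{Miao,HW}.

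First I would fix normalizations. Because $\partial M$ is isometric to the round $\mathbb{S}^2$ it has area $4\pi$ and Gauss curvature $\equiv 1$, hence is strictly convex; by the Weyl embedding theorem it embeds isometrically into $(\mathbb{R}^3,\delta)$, uniquely up to a rigid motion, as the unit sphere $\partial B_1$, whose mean curvature is $H_0=2$. Thus the Brown--York mass of $\partial M$ vanishes,
\[
m_{BY}(\partial M)=\frac{1}{8\pi}\int_{\partial M}(H_0-H)\,d\sigma=\frac{1}{8\pi}\int_{\mathbb{S}^2}(2-2)\,d\sigma=0 .
\]
Since $R(g)\ge 0$ and $\partial M$ has positive Gauss curvature and positive mean curvature, the hypotheses of the Shi--Tam theorem hold, and its rigidity statement says that $m_{BY}(\partial M)=0$ forces $(M,g)$ to be isometric to the region of $(\mathbb{R}^3,\delta)$ enclosed by $\partial M$. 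Since $\partial M$ is carried to the unit sphere and $M$ is compact, connected and orientable, that region is exactly $\overline{B_1}$, which is the assertion.

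If one prefers to reprove the rigidity in this case rather than quote it, I would argue as follows. Foliate $\mathbb{R}^3\setminus B_1$ by the coordinate spheres $S_r$ ($r\ge1$) and seek a quasi-spherical metric $g_+=u^2\,dr^2+r^2 g_{\mathbb{S}^2}$ with $R(g_+)=0$ and with the mean curvature of $S_1$ with respect to $g_+$ equal to $H=2$; the resulting parabolic equation for $u$ in the variable $r$ is solved by $u\equiv 1$, so $g_+$ is simply the flat metric on $\mathbb{R}^3\setminus B_1$, asymptotically flat with $m_{ADM}(g_+)=0$. Gluing $(M,g)$ to $(\mathbb{R}^3\setminus B_1,g_+)$ along $\partial M=\partial B_1$ yields a complete asymptotically flat $(\widehat M,\widehat g)$, smooth away from the gluing sphere $\Sigma_0$, with $R(\widehat g)\ge 0$ on each side and with the mean curvatures of $\Sigma_0$ computed from the two sides agreeing; this is the admissibility condition for a corner. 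The positive mass theorem with corners gives $m_{ADM}(\widehat g)\ge 0$, while $m_{ADM}(\widehat g)=m_{ADM}(g_+)=0$, so equality holds, and the rigidity part forces the corner to be absent and $(\widehat M,\widehat g)$ to be flat $\mathbb{R}^3$; hence $M\cong\overline{B_1}$. Equivalently one can track the Shi--Tam mass $m(r)$ along the foliation of $g_+$: it is nonincreasing with $m(1)=m_{BY}(\partial M)=0$ and $m(\infty)=m_{ADM}(g_+)\ge 0$, so $m\equiv 0$, and the equality cases of the monotonicity formula and of the corner argument pin down $g_+$ and $g$ respectively.

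The main obstacle is exactly the rigidity (equality) case of the positive mass theorem with corners: one must upgrade ``$m_{ADM}=0$'' from ``each side is scalar flat'' to ``the corner is genuinely absent'', i.e.\ the induced metric and the full second fundamental form agree across $\Sigma_0$. This is the technical heart; it is handled by smoothing the Lipschitz metric in a neighborhood of $\Sigma_0$ while keeping $R\ge 0$ up to an error that integrates to zero in the limit, and then applying the standard rigidity of the asymptotically flat positive mass theorem. The matching of the mean curvatures across $\Sigma_0$ is precisely what makes this smoothing possible without destroying $R\ge 0$.
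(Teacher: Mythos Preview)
The paper does not actually give a proof of this lemma; it simply states the result and cites \cite{Miao}, \cite{ST}, and \cite{HW} as sources. Your proposal is a correct and reasonably detailed expansion of precisely those references---the Shi--Tam Brown--York mass rigidity and the Miao/Hang--Wang positive mass theorem with corners---so it is entirely in line with what the paper intends.
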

To prove Theorem \ref{hyperbolic cmc rigidity} we need a rigidity result for hyperbolic case of sphere, see Theorem 3.8 in \cite{ST07} by Y. Shi and L. F. Tam.
\begin{lemma}\label{hyperbolic sphere rigidity}
Let (M, g) be a compact orientable Riemannian 3-manifold with scalar curvature $R(g)\geq-6$ and $\partial M$ isometric to round $S^2$ with mean curvature $H=2\sqrt{2}$. Then (M, g) is isometric to the unit ball in hyperbolic space $\mathbb{H}^3$.
\end{lemma}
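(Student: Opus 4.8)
The final statement to prove is Lemma~\ref{hyperbolic sphere rigidity}, the hyperbolic analogue of the Shi--Tam rigidity theorem: if $(M,g)$ is a compact orientable $3$-manifold with $R(g)\geq -6$ and $\partial M$ isometric to the round $\mathbb{S}^2$ with mean curvature $H=2\sqrt{2}$, then $(M,g)$ is isometric to the unit geodesic ball in $\mathbb{H}^3$. Since this is cited as Theorem~3.8 of \cite{ST07}, the plan is to recall the proof strategy rather than reinvent it: the natural approach is a quasi-local mass argument via a hyperbolic analogue of the Shi--Tam monotonicity, comparing the actual manifold against the static reference $\mathbb{H}^3$.

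First I would set up the reference: the unit geodesic sphere in $\mathbb{H}^3$ has Gauss curvature $1$ (so it is isometric to the round $\mathbb{S}^2$) and mean curvature exactly $2\sqrt{2} = 2\coth(\operatorname{arctanh}(1/\sqrt2))$-type value, which matches the boundary data. One then fills in the outside: solve for the unique foliation of the exterior region $\mathbb{H}^3 \setminus B_1$ by geodesic spheres, and prescribe on each leaf the same induced metric as a corresponding leaf of a solution to a parabolic flow (inverse mean curvature type / prescribed metric) starting from $(\partial M, g|_{\partial M})$. Concretely, following Shi--Tam, one isometrically embeds $\partial M$ into $\mathbb{H}^3$ (possible since it is round), constructs a collar with the prescribed boundary metric evolving, and obtains a Brown--York-type quantity along the foliation. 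The key analytic input is the monotonicity: under $R(g) \geq -6$, the hyperbolic Brown--York mass (or the associated quasi-local quantity $\int (H_0 - H)\,d\mu$ suitably weighted by the static potential $V = \sqrt{1+r^2}$) is monotone, and its limit at infinity is the (nonnegative, by the hyperbolic positive mass theorem) total mass of the glued manifold, while its value at $\partial M$ is zero because $H = H_0 = 2\sqrt2$ and the metrics agree.

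The chain of inequalities then forces $0 \leq m_\infty \leq (\text{boundary term}) = 0$, so every inequality is an equality. Equality in the monotonicity formula forces $R(g) \equiv -6$ in $M$, forces the mean curvature comparison to be saturated on every leaf of an interior foliation (obtained by running the corresponding flow inward, or by the rigidity case of the hyperbolic positive mass theorem applied to the doubled/glued manifold), and ultimately forces the second fundamental form of $\partial M$ to agree with the umbilic model value. Feeding this back, one gets that $M$ is foliated by umbilic round spheres with exactly the $\mathbb{H}^3$ profile, hence $(M,g)$ is isometric to the unit ball in $\mathbb{H}^3$. This is exactly the structure of the argument in \cite{ST07}, and since the statement is quoted verbatim from there, I would simply cite it; the main obstacle, were one to reprove it from scratch, is establishing the hyperbolic Shi--Tam monotonicity and its rigidity case, which requires the correct static-potential weighting and a careful maximum-principle analysis of the boundary value problem for the extension — precisely the technical heart of \cite{ST07}.

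\begin{proof}
This is Theorem~3.8 of \cite{ST07}; we only indicate the idea. Since $\partial M$ is isometric to the round $\mathbb{S}^2$, it embeds isometrically as the unit geodesic sphere $S$ in $\mathbb{H}^3$, whose mean curvature (computed with respect to the outward normal) is $2\sqrt2$, matching the given data. One then constructs an extension of $(M,g)$ by gluing along $\partial M$ a collar foliated by spheres with prescribed induced metrics, chosen so that the exterior asymptotically models the standard foliation of $\mathbb{H}^3\setminus B_1$ by geodesic spheres. Along this foliation Shi and Tam establish a monotonicity formula for a hyperbolic Brown--York type quantity, weighted by the static potential $V=\sqrt{1+r^2}$, valid under the hypothesis $R(g)\geq -6$. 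At $\partial M$ this quantity vanishes because both the induced metric and the mean curvature agree with those of $S\subset\mathbb{H}^3$, while at infinity it converges to the total mass of the glued asymptotically hyperbolic manifold, which is nonnegative by the hyperbolic positive mass theorem. Hence the quantity is identically zero and all inequalities in the monotonicity argument are equalities. The equality case forces $R(g)\equiv -6$ on $M$, forces $\partial M$ to be totally umbilic with the model second fundamental form, and forces the interior foliation to coincide with that of $\mathbb{H}^3$; therefore $(M,g)$ is isometric to the unit geodesic ball in $\mathbb{H}^3$.
\end{proof}
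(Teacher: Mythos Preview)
Your proposal is correct and matches the paper's approach exactly: the paper does not prove this lemma at all but simply states it as a citation of Theorem~3.8 in \cite{ST07}. You go further by sketching the Shi--Tam monotonicity argument, which is accurate in outline and more informative than what the paper provides.
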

After Lemma \ref{uniqueness}, Lemma \ref{sphere rigidity} and Lemma \ref{hyperbolic sphere rigidity}, now we are in the position to prove Theorem \ref{cmc rigidity} and Theorem \ref{hyperbolic cmc rigidity}.

\begin{proof}[Proof of Theorem \ref{cmc rigidity} and Theorem \ref{hyperbolic cmc rigidity}]
If $m_H(\Sigma)= 0$ on nearly round stable $CMC$ surface $\Sigma$, without loss of generality, assume $|\Sigma|=4\pi$, then $H=2(resp. H=2\sqrt{2})$
$$L_{\Sigma}=-\Delta_{\Sigma}+K-3$$
By Lemma \ref{uniqueness} we get the nearly round stable $CMC$ surface $\Sigma$ is standard $S^2$ in $\mathbb{R}^3$. Then by Lemma \ref{sphere rigidity}(resp. Lemma \ref{hyperbolic sphere rigidity}), we conclude that $\Omega$ isometric to unit ball in $\mathbb{R}^3(\mathbb{H}^3_{-1})$.
\end{proof}

Theorem \ref{cmc rigidity}(resp. Theorem \ref{hyperbolic cmc rigidity}) and Lemma \ref{CY}(resp. Lemma \ref{hyperbolic hawking mass}) can help us to understand Willmore functional in manifold with scalar curvature $R(g)\geq0(resp. R(g)\geq-6)$.
\begin{cor}
Let (M,g) be a complete Riemnnian three manifold with scalar curvature $R(g)\geq0(resp. R(g)\geq-6)$, $\Sigma=\partial \Omega$ is a stable CMC sphere, then $W(\Sigma)\leq 4\pi$.
\\If $\Sigma$ is nearly round, then equality holds if and only if $\Sigma$ is standard $S^2$ and $\Omega$ isometric to unit ball in $\mathbb{R}^3(resp. \mathbb{H}^3)$.
\end{cor}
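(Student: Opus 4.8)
The plan is to observe that the stated inequality $W(\Sigma)\le 4\pi$ is nothing but a restatement of the nonnegativity of Hawking mass, and then to dispatch the equality case by combining the rigidity Theorems \ref{cmc rigidity} and \ref{hyperbolic cmc rigidity} with one elementary computation on the model spaces.

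\emph{The inequality.} Since $|\Sigma|>0$, the definitions of $W(\Sigma)$ and $m_H(\Sigma)$ give the exact equivalences
\begin{equation*}
W(\Sigma)\le 4\pi \iff \int_\Sigma H^2 \le 16\pi \iff m_H(\Sigma)\ge 0
\end{equation*}
when $R(g)\ge 0$, and the same chain with $H^2$ replaced by $H^2-4$ when $R(g)\ge -6$; likewise $W(\Sigma)=4\pi$ is equivalent to $m_H(\Sigma)=0$. Hence the claimed bound for a stable CMC sphere $\Sigma=\partial\Omega$ is exactly Lemma \ref{CY} (resp. Lemma \ref{hyperbolic hawking mass}), and no new argument is required here.

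\emph{The equality case.} Suppose first that $\Sigma$ is nearly round with $W(\Sigma)=4\pi$, so $m_H(\Sigma)=0$. I would rescale the ambient metric so that $|\Sigma|=4\pi$, which changes neither $W(\Sigma)$ nor the condition $m_H=0$, and then apply Theorem \ref{cmc rigidity} (resp. Theorem \ref{hyperbolic cmc rigidity}) to the simply connected domain $\Omega$ to conclude that $\Omega$ is isometric to the unit ball of $\mathbb{R}^3$ (resp. $\mathbb{H}^3$), hence $\Sigma$ is the standard $\mathbb{S}^2$. For the converse I would simply compute on the models. The boundary of the unit ball in $(\mathbb{R}^3,\delta)$ has $H\equiv 2$ and area $4\pi$, so $W(\Sigma)=\tfrac14\int_\Sigma H^2=4\pi$. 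In the hyperbolic case, the substitution $r=\sinh\rho$ turns $g_{\mathbb{H}}$ into $d\rho^2+\sinh^2\rho\, g_{\mathbb{S}^2}$, so $\{r=1\}$ is the geodesic sphere with $\sinh\rho=1$, $\cosh\rho=\sqrt2$; it has principal curvatures $\coth\rho$ and hence $H\equiv 2\coth\rho=2\sqrt2$, together with area $4\pi$, giving $W(\Sigma)=\tfrac14\int_\Sigma(H^2-4)=4\pi$. This closes the equivalence.

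I do not expect a genuine obstacle here: the entire content is carried by the already-established Lemmas \ref{CY}, \ref{hyperbolic hawking mass} and Theorems \ref{cmc rigidity}, \ref{hyperbolic cmc rigidity}. The only points that demand a little care are the bookkeeping between the $H^2$ and $H^2-4$ normalizations in the two curvature regimes, the harmless rescaling to $|\Sigma|=4\pi$ needed to quote the rigidity theorems verbatim, and the check that the relevant model geodesic sphere carries precisely the mean curvature $2$ (resp. $2\sqrt2$) appearing in Lemmas \ref{sphere rigidity} and \ref{hyperbolic sphere rigidity}.
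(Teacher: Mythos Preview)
Your proposal is correct and matches the paper's approach: the corollary is stated without a separate proof, as it is an immediate consequence of Lemma \ref{CY} (resp. Lemma \ref{hyperbolic hawking mass}) for the inequality and Theorem \ref{cmc rigidity} (resp. Theorem \ref{hyperbolic cmc rigidity}) for the rigidity. Your additional verification of the converse direction on the model spaces is a welcome detail that the paper leaves implicit.
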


\section{Rigidity of Hawking mass for nearly round isoperimetric surfaces}
Theorem 1 can be used to prove rigidity of isoperimetric surface in $AF$ manifold. By the manifold constructed by A. Carlotto and R. Schoen \cite{CS}, see also \cite{CESY}:
\begin{example}
There is an asymptotically flat Riemannian metric $g$ on $\mathbb{R}^3$ with non-negative scalar curvature and positive mass and such that $g=\delta$ on $\mathbb{R}^2\times (0, +\infty)$.
\end{example}
We can only expect flatness inside the surface with Hawking mass for stable $CMC$ surface. In order to prove Theorem \ref{iso rigidity} we need the following isoperimetric inequality  of \cite{Shi} which also plays a key role in proving the existence of isoperimetric surface for all volume in $AF$ three manifold. It says that if there exists a Euclidean ball in an AF manifold with nonnegative scalar curvature, then the AF manifold must be $\mathbb{R}^3$.
\begin{lemma}\cite{Shi} \label{shi rigidity}
Suppose (M, g) is an AF manifold with scalar curvature $R(g)\geq0$.
Then for any $V>0$ \begin{eqnarray}I(V)\leq(36\pi)^{\frac{1}{3}}V^{\frac{2}{3}}.\end{eqnarray}
There is a $V_0 > 0$ with
\begin{eqnarray}I(V_0)=(36\pi)^{\frac{1}{3}}V_0^{\frac{2}{3}}\end{eqnarray}
if and only if (M, g) is isometric to $\mathbb{R}^3$.
\end{lemma}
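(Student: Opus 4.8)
The plan is to read both the inequality and its rigidity off the single function $F(V):=I(V)^{3/2}$, by combining the Christodoulou--Yau estimate (Lemma \ref{CY}) with the first variation of the isoperimetric profile. The bare inequality is elementary and uses only asymptotic flatness: in the end $g=\delta+O(r^{-\tau})$, so a round coordinate ball of suitable radius placed at Euclidean distance $d$ in the end has volume $V$ and area $(36\pi)^{1/3}V^{2/3}\bigl(1+O(d^{-\tau})\bigr)$, and letting $d\to\infty$ gives $I(V)\le(36\pi)^{1/3}V^{2/3}$ without using $R\ge 0$. So $R\ge 0$ enters only in the rigidity, through Lemma \ref{CY}.

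For the rigidity, fix $V$, let $\Omega_V$ be an isoperimetric region of volume $V$ (Appendix \ref{existence}), and $\Sigma_V=\partial\Omega_V$, a smooth stable CMC surface by interior regularity in dimension three; I will use that $\Sigma_V$ is a connected $2$-sphere, the topological input on isoperimetric surfaces in AF manifolds. The profile $I$ is locally Lipschitz (Appendix \ref{continuity} together with comparison arguments), hence differentiable a.e., and at such points the first variation gives $I'(V)=H_{\Sigma_V}\ge 0$, with $I'_+(V)\le H_{\Sigma_V}\le I'_-(V)$ in general. Lemma \ref{CY} gives $H_{\Sigma_V}^2\,I(V)=\int_{\Sigma_V}H^2\le 16\pi$, so for a.e.\ $V$,
\begin{equation}\label{Fprime}
F'(V)=\tfrac32 I(V)^{1/2}I'(V)=\tfrac32\bigl(H_{\Sigma_V}^2 I(V)\bigr)^{1/2}\le 6\sqrt\pi ,
\end{equation}
and since $F$ is absolutely continuous with $F(0)=0$, integrating recovers $F(V)\le 6\sqrt\pi\,V$.

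Suppose now $I(V_0)=(36\pi)^{1/3}V_0^{2/3}$, i.e.\ $F(V_0)=6\sqrt\pi\,V_0$. Then $\int_0^{V_0}(6\sqrt\pi-F')=6\sqrt\pi V_0-F(V_0)=0$ with a nonnegative integrand, so $F'\equiv 6\sqrt\pi$ a.e.\ on $(0,V_0)$; by (\ref{Fprime}) this forces $H_{\Sigma_t}^2 I(t)=16\pi$, i.e.\ $m_H(\Sigma_t)=0$, for a.e.\ $t\in(0,V_0)$. By the equality discussion in the proof of Lemma \ref{CY} (as in Proposition \ref{eigenvalue}) each such leaf satisfies $R\equiv 0$ and $A^0_{\Sigma_t}\equiv 0$ along $\Sigma_t$, and for small $t$ the leaf is nearly round (being nearly Euclidean), so Lemma \ref{uniqueness} makes it a round $\mathbb{S}^2$. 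In the arclength gauge $g=ds^2+h_s$ on $\Omega_{V_0}$ minus its center, total umbilicity gives $\partial_s h_s=H_s h_s$ with $H_s$ constant on each leaf, so $h_s=\varphi(s)h_0$; inserting $R\equiv 0$ into the warped-product curvature identity forces $h_0$ to have constant (positive) Gauss curvature and $\varphi$ to be the Euclidean profile, smoothness at the center excluding a cone. Hence $\Omega_{V_0}$ is isometric to a round ball in $(\mathbb{R}^3,\delta)$.

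The remaining step, upgrading this to global flatness, is the main obstacle. The set $S=\{V:\ I(V)=(36\pi)^{1/3}V^{2/3}\}$ is closed by continuity of $I$ and contains $[0,V_0]$. At any $V\in S$ with $V>0$ the region $\Omega_V$ is a flat ball, so $\Sigma_V$ is a nondegenerate round stable CMC sphere, the isoperimetric surfaces vary smoothly nearby and $I$ is smooth there, and on the equality locus the second-variation differential inequality for $I''$ becomes an equality (since $R=0$ and $A^0=0$ on the leaves); thus $F$ solves a genuine second-order ODE with the same Cauchy data at $V$ as the line $t\mapsto 6\sqrt\pi t$, whence $S$ is open and $S=[0,\infty)$. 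Therefore $m_H(\Sigma_V)=0$ for every $V$, and since for large $V$ the isoperimetric surfaces are close to the canonical stable CMC foliation of \cite{HY,QT} (as in \cite{CESY}), along which $m_H\to m_{ADM}$, we get $m_{ADM}(M)=0$; the positive mass theorem then yields $(M,g)\cong(\mathbb{R}^3,\delta)$. The genuinely delicate points I expect are (i) the connectedness and genus of $\Sigma_V$ needed to invoke Lemma \ref{CY}, and (ii) making the open--closed propagation of $S$ rigorous given that $I$ is a priori only Lipschitz; an alternative to (ii) is to invoke the equality of Huisken's isoperimetric mass with $m_{ADM}$ for AF manifolds with $R\ge 0$.
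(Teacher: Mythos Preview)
The paper does not give its own proof of this lemma: it is quoted from \cite{Shi} and used as a black box in the proofs of Theorems~\ref{iso rigidity} and~\ref{rigidity2}, so there is no in-paper argument to compare against. Your proof of the bare inequality---pushing round coordinate balls to infinity in the AF end---is the standard one and is correct.

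Your rigidity argument, however, has genuine gaps beyond the two you flag at the end. The a.e.\ bound $F'(V)\le 6\sqrt\pi$ in \eqref{Fprime} rests on Lemma~\ref{CY}, which requires $\Sigma_V$ to be a \emph{connected sphere}. In this paper that topological input is secured only for small $V$ (Lemma~\ref{small isopri}) and for very large $V$ (via \cite{CESY}), not across a general interval $(0,V_0)$; without it the integrand $6\sqrt\pi-F'$ need not be nonnegative, and the step ``$\int_0^{V_0}(6\sqrt\pi-F')=0$ forces $F'\equiv 6\sqrt\pi$'' does not go through. The warped-product computation is a second, independent gap: writing $g=ds^2+h_s$ on $\Omega_{V_0}$ presupposes that the isoperimetric regions $\Omega_t$ for $t<V_0$ are nested and foliate $\Omega_{V_0}$, but nesting of isoperimetric regions for different volumes is not known in general, and producing such a foliation is essentially the rigidity you are trying to prove. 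Finally, your open--closed continuation of $S$ and the limit $m_H\to m_{ADM}$ along large isoperimetric surfaces lean on the structure theorems of \cite{HY,QT,CESY}, which are stated under $m_{ADM}>0$; using them to conclude $m_{ADM}=0$ therefore needs at least an additional dichotomy argument that you have not supplied.
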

Also there has an analogous result for isoperimetric profile on AH manifold, see Propostion 3.3 in \cite{JSZ}.
\begin{lemma}\cite{JSZ} \label{hyperbolic shi rigidity}
Suppose (M, g) is an AH manifold with scalar curvature $R(g)\geq-6$.
Then for any $V>0$ \begin{eqnarray}I(V)\leq I_{\mathbb{H}}(V).\end{eqnarray}
There is a $V_0 > 0$ with
\begin{eqnarray}I(V_0)=I_{\mathbb{H}}(V_0)\end{eqnarray}
if and only if (M, g) is isometric to $(\mathbb{H}^3, g_{\mathbb{H}}) $.
\end{lemma}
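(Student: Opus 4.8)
\textbf{Proof proposal for Lemma \ref{hyperbolic shi rigidity}.}

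The plan is to mirror the strategy of Lemma \ref{shi rigidity} from \cite{Shi}, now in the asymptotically hyperbolic setting, combining a sharp comparison of the isoperimetric profile $I$ against the hyperbolic model $I_{\mathbb{H}}$ with a rigidity analysis of the equality case. First I would establish the inequality $I(V)\le I_{\mathbb{H}}(V)$ for all $V>0$. The natural mechanism is the weak (distributional) differential inequality satisfied by the isoperimetric profile: at a volume $V$ where an isoperimetric region $\Omega$ exists with smooth boundary $\Sigma=\partial\Omega$, one has, using the Gauss–Bonnet theorem on the genus-zero isoperimetric surface together with $R(g)\ge -6$ and the stability inequality (second variation of area under volume-preserving deformations), a second-order differential inequality for $I$ of exactly the same form as the one satisfied by $I_{\mathbb{H}}$ as an equality. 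More precisely, on AH manifolds the isoperimetric surfaces for large volumes are known (by the works cited, \cite{HY},\cite{QT},\cite{CESY}-type arguments in the AH case) to be of sphere topology, and the Heintze–Karcher / Gauss–Bonnet computation yields $I'' I \le (\text{model expression}) $. Comparing this ODE inequality with the model ODE satisfied by $I_{\mathbb{H}}$, together with the asymptotic normalization at $V\to\infty$ coming from the AH decay $\tau=3$ (so that large coordinate spheres have area and enclosed volume matching the hyperbolic model up to lower-order terms, hence $I(V)-I_{\mathbb{H}}(V)\to 0$), a maximum-principle/Gronwall argument forces $I\le I_{\mathbb{H}}$ on all of $(0,\infty)$.

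Next, for the rigidity statement, suppose $I(V_0)=I_{\mathbb{H}}(V_0)$ for some $V_0>0$. The first step is to produce, at this volume, an actual isoperimetric region $\Omega_0$ realizing $I(V_0)$; for AH manifolds with $\tau=3$ this follows from existence results analogous to those sketched in Appendix \ref{existence}, or directly because $I(V_0)<$ (the profile of any bubbling-off configuration) thanks to $I\le I_{\mathbb{H}}$ being an equality at $V_0$ but strictly below the model for concentrated spheres escaping to infinity — ruling out loss of compactness. The boundary $\Sigma_0=\partial\Omega_0$ is then a stable CMC sphere, and equality in the chain of inequalities used to prove $I\le I_{\mathbb{H}}$ forces every intermediate inequality to be an equality: in particular $R(g)\equiv -6$ along $\Sigma_0$, the traceless second fundamental form $A^0\equiv 0$ on $\Sigma_0$, and $\Sigma_0$ has constant Gauss curvature, i.e. it is a round sphere. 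Comparing areas and volumes with the model then pins down the radius, so $\Sigma_0$ is isometric to the standard geodesic sphere in $\mathbb{H}^3$ with the model mean curvature $H=2\sqrt{2}\coth r_0$ (or, after renormalizing $|\Sigma_0|=4\pi$, exactly the situation of Lemma \ref{hyperbolic sphere rigidity} with $H=2\sqrt{2}$). Applying the hyperbolic Shi–Tam rigidity Lemma \ref{hyperbolic sphere rigidity} to $\overline{\Omega_0}$ shows $\Omega_0$ is isometric to a geodesic ball in $\mathbb{H}^3$. Finally, one propagates this flatness outward: the stable CMC foliation emanating from $\Sigma_0$ towards infinity consists (by the equality case, at every leaf, of the monotonicity of the relevant mass/area functional) of round spheres bounding hyperbolic balls, and since $M\setminus \Omega_0$ is exhausted by such leaves, $(M,g)$ is isometric to $(\mathbb{H}^3,g_{\mathbb{H}})$.

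The converse — that $(\mathbb{H}^3,g_{\mathbb{H}})$ itself satisfies $I(V_0)=I_{\mathbb{H}}(V_0)$ for every $V_0$ — is immediate since geodesic balls are isoperimetric in $\mathbb{H}^3$.

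I expect the main obstacle to be the derivation of the sharp differential inequality for $I$ and the control of the equality case rigidity propagation. Two technical points deserve care: first, $I$ is only known to be $C^0$ and twice-differentiable in a barrier/viscosity sense, so the ODE comparison with $I_{\mathbb{H}}$ must be phrased distributionally (using upper barriers built from rescaled pieces of the model profile), exactly as in \cite{Shi}; establishing the continuity and the right form of the weak inequality in the AH case is where Appendix \ref{continuity}-type estimates enter. Second, once equality holds at a single $V_0$ one must upgrade "$\Sigma_0$ round and $\Omega_0$ a hyperbolic ball" to a global statement, which requires knowing that the isoperimetric surfaces foliate a neighborhood and that equality in the profile at $V_0$ forces equality on an interval — a standard but delicate real-analysis argument about when a subsolution touching a solution of an ODE must coincide with it on a maximal interval, combined with unique continuation for the CMC foliation. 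Granting those, the argument closes.
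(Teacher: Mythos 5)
You should first note that the paper does not prove this lemma at all: it is quoted as Proposition 3.3 of \cite{JSZ}, so there is no internal argument to match and your sketch has to stand on its own; as written it has genuine gaps. For the inequality $I(V)\le I_{\mathbb{H}}(V)$, your ODE-comparison mechanism needs, at every volume, an isoperimetric region whose boundary is a \emph{connected, genus-zero} surface (otherwise the Gauss--Bonnet step produces $4\pi$ times the number of components and the differential inequality you want to compare with the model is lost), plus existence of minimizers for all volumes; neither is available a priori in the AH setting --- the paper itself only obtains sphere topology for small volumes (Lemma \ref{small isopri}) and treats connectedness as a genuine hypothesis in Lemma \ref{increasing}. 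Moreover your asymptotic normalization $I(V)-I_{\mathbb{H}}(V)\to 0$ is unjustified and is generally false when the mass is positive (already in Schwarzschild-type models the deficit diverges as $V\to\infty$), and it is also unnecessary: since $g\to g_{\mathbb{H}}$ with $\tau=3$, testing $I$ with regions of volume $V$ centered at points escaping to infinity gives $I(V)\le I_{\mathbb{H}}(V)+o(1)$ directly, which is the standard and far simpler route to the inequality, with no profile ODE needed.

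The rigidity half is where the real content lies, and your sketch does not close it. You cannot ``renormalize $|\Sigma_0|=4\pi$'': unlike $R\ge 0$, the condition $R(g)\ge -6$ is not scale-invariant, so the reduction to Lemma \ref{hyperbolic sphere rigidity} with $H=2\sqrt{2}$ is not available for an arbitrary $V_0$ (also $2\sqrt{2}\coth r_0$ is not the mean curvature of a geodesic sphere in $\mathbb{H}^3$; that is $2\coth r_0$, and $2\sqrt 2$ arises only for the particular sphere of area $4\pi$ via the Gauss equation). Even granting that $\Omega_0$ is a hyperbolic ball, the outward propagation ``the stable CMC foliation emanating from $\Sigma_0$ consists of round spheres bounding hyperbolic balls'' is asserted, not proved: no such foliation, nor a leafwise equality statement, has been established, and this global step --- showing that equality at one volume forces equality of the profile on an interval reaching infinity, hence vanishing mass, and then invoking the asymptotically hyperbolic positive mass theorem or the extension arguments of Ji--Shi--Zhu --- is precisely the content of the cited result and cannot be waved through. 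So the proposal recovers the easy direction by a more complicated and partly incorrect route and leaves the rigidity direction essentially open; if you want a self-contained treatment you should follow the scheme of \cite{Shi} and \cite{JSZ} rather than an ODE comparison anchored at infinity.
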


Now we can prove the rigidity of nearly round isoperimetric surface:
\begin{proof}[Proof of Theorem \ref{iso rigidity} and Theorem \ref{hyperbolic iso rigidity}]
If there is an nearly round isoperimetric surface  $\Sigma$ with $m_H(\Sigma)= 0$, assume $|\Sigma|=4\pi$, then $H=2$. By Theorem \ref{cmc rigidity}, the isoperimetric region is a Euclidean ball of volume $\frac{4}{3}\pi$. So we have
\begin{eqnarray}
4\pi=I(\frac{4}{3}\pi)=(36\pi)^{\frac{1}{3}}(\frac{4}{3}\pi)^{\frac{2}{3}},
\end{eqnarray}
by the rigidity part of Lemma \ref{shi rigidity}, we conclude that $(M, g)$ is isometric to $\mathbb{R}^3$.
\\Theorem \ref{hyperbolic iso rigidity} is follows similarly by Theorem \ref{hyperbolic cmc rigidity} and Lemma \ref{hyperbolic shi rigidity}.
\end{proof}

In fact, large surfaces of the canonical stable CMC foliation in \cite{HY,QT} are isoperimetric and close to the coordinate spheres \cite{CESY}.

\begin{cor}
Let (M,g) be an AF three manifold with scalar curvature $R(g)\geq0$, then the Hawking mass of all the large enough surfaces in canonical stable $CMC$ foliation by in \cite{HY,QT} are positive unless (M,g) is isometric to $\mathbb{R}^3$.
\end{cor}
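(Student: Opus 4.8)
The plan is to read this off from Theorem~\ref{iso rigidity} together with the description of the large leaves of the canonical foliation as isoperimetric surfaces. The first step is to invoke the result of \cite{CESY}: there is a threshold $\sigma_0$ (equivalently, a lower bound on the enclosed volume) such that for every leaf $\Sigma_\sigma$ of the Huisken--Yau \cite{HY} / Qing--Tian \cite{QT} canonical stable $CMC$ foliation with $\sigma\geq\sigma_0$, the surface $\Sigma_\sigma$ is isoperimetric for the volume it bounds and, after rescaling to area $4\pi$, converges in $C^2$ to the round unit sphere as $\sigma\to\infty$. In particular the intrinsic Gauss curvature of the rescaled leaf tends to $1$ in $C^0$, so after enlarging $\sigma_0$ if necessary every such $\Sigma_\sigma$ meets the ``nearly round'' hypothesis of Theorem~\ref{iso rigidity} with the constant $\delta_0$ appearing there. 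The key point that makes this reduction legitimate is that $\delta_0$ is a universal constant: it comes out of Lemma~\ref{uniqueness} and depends only on $S^2$, not on $(M,g)$.

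The second step is the dichotomy itself. Each leaf $\Sigma_\sigma$ is a stable $CMC$ sphere, so Lemma~\ref{CY} gives $m_H(\Sigma_\sigma)\geq 0$ for all $\sigma$. Suppose $(M,g)$ is \emph{not} isometric to $(\mathbb{R}^3,\delta)$ and yet $m_H(\Sigma_{\sigma_1})=0$ for some $\sigma_1\geq\sigma_0$. Then $\Sigma_{\sigma_1}$ is a nearly round isoperimetric surface with vanishing Hawking mass, so Theorem~\ref{iso rigidity} forces $(M,g)$ to be isometric to $(\mathbb{R}^3,\delta)$, a contradiction. Hence, as soon as $(M,g)\not\cong(\mathbb{R}^3,\delta)$, we have $m_H(\Sigma_\sigma)>0$ for every $\sigma\geq\sigma_0$, which is exactly the claim.

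The only genuinely technical ingredient, and the place where care is needed, is the matching in the first step: one must check that the convergence of rescaled leaves to the round sphere in \cite{CESY} takes place in a topology strong enough to control the intrinsic Gauss curvature in $C^0$. The $C^2$ graphical convergence over the coordinate spheres in the asymptotic chart, combined with the decay $|h_{ij}|+r|\partial h_{ij}|+r^2|\partial^2 h_{ij}|\leq Cr^{-\tau}$ of the $AF$ condition, does suffice for this; together with the manifold-independence of $\delta_0$ from Lemma~\ref{uniqueness} it yields a uniform $\sigma_0$, and the corollary follows.

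Finally, I would note that the same conclusion can be obtained without Theorem~\ref{iso rigidity}: by the Positive Mass Theorem, $(M,g)\not\cong(\mathbb{R}^3,\delta)$ forces the ADM mass to be strictly positive, and by \cite{HY} the Hawking mass of the leaves converges to the ADM mass as $\sigma\to\infty$, so $m_H(\Sigma_\sigma)>0$ for all large $\sigma$. The argument above is preferable here only because it keeps the corollary within the circle of ideas of this section.
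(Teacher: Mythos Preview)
Your proposal is correct and matches the paper's intended argument: the paper states the corollary immediately after observing (citing \cite{CESY}) that large leaves of the canonical foliation are isoperimetric and close to coordinate spheres, so that Theorem~\ref{iso rigidity} applies directly. Your write-up simply makes this explicit, and your attention to the universality of $\delta_0$ from Lemma~\ref{uniqueness} and to the strength of the convergence is welcome detail that the paper leaves implicit.
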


\section{Rigidity of Hawking mass for small isoperimetric surfaces}

For rigidity of small isoperimetric surface, we need to prove that it is a sphere when the volume is small enough.

\subsection{Topology of small isoperimetric surface}
It is known in \cite{Ros} that for a compact manifold without boundary, the isoperimetric surface is a topology sphere when enclosing volume is small enough contained in a geodesic ball. For AF manifolds we still have this property, the proof follows as compact case and relies on the behavior of infinity.
\begin{lemma}\label{small isopri}
If (M,g) is an AF three manifold, then there exits a $\delta_0>0$, such that for all volume $V\leq\delta_0$ the isoperimetric region is convex and contained in a small neighborhood of some point of M. In particular,
\begin{eqnarray}I(V)\sim (36\pi)^{\frac{1}{3}}V^{\frac{2}{3}} , when \ V \rightarrow 0.\end{eqnarray}

\end{lemma}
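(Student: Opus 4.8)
The plan is to adapt A. Ros's treatment of the small-volume isoperimetric problem on closed manifolds (\cite{Ros}); the only new point is the non-compactness of $M$, and this is harmless because an AF manifold has \emph{uniformly bounded geometry} -- bounded sectional curvature and a uniform positive lower bound on the injectivity radius -- as an immediate consequence of the decay hypothesis $|h_{ij}|+r|\partial h_{ij}|+r^{2}|\partial^{2}h_{ij}|\le Cr^{-\tau}$, so the near-Euclidean behaviour near infinity is quantitatively no worse than on any fixed compact set. I use throughout the existence of an isoperimetric region $\Omega_{V}$ for every $V>0$ (Appendix \ref{existence}) and the interior regularity of $\Sigma_{V}=\partial\Omega_{V}$, a smooth embedded surface of constant mean curvature $H_{V}$ (the Lagrange multiplier, with mean curvature vector pointing into $\Omega_{V}$). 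The first step is to locate $I(V)$ for small $V$: taking geodesic balls $B_{r}(p)$ and using $\vol(B_{r}(p))=\tfrac{4}{3}\pi r^{3}(1+O(r^{2}))$, $|\partial B_{r}(p)|=4\pi r^{2}(1+O(r^{2}))$ with constants uniform in $p$ gives $I(V)\le(36\pi)^{1/3}V^{2/3}(1+O(V^{2/3}))$, so in particular $|\Sigma_{V}|=I(V)\to0$ as $V\to0$.

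The second, and main, step is to show $\operatorname{diam}\Omega_{V}\to0$. Since $|\Sigma_{V}|\to0$ while $M$ has uniformly bounded geometry, the monotonicity formula for surfaces of bounded mean curvature forces $H_{V}\to\infty$: if $H_{V}$ stayed bounded along a subsequence, monotonicity would give a density bound $|\Sigma_{V}\cap B_{s}(x)|\ge cs^{2}$ for a \emph{fixed} $s>0$ and any $x\in\Sigma_{V}$, contradicting $|\Sigma_{V}|\to0$. Now fix $x_{0}\in\Omega_{V}$, let $\rho=\sup_{y\in\overline{\Omega_{V}}}d(x_{0},y)$, so $\overline{\Omega_{V}}\subset\overline{B_{\rho}(x_{0})}$ with the two boundaries touching at some point $q$; comparing $\Sigma_{V}$ with the geodesic sphere $\partial B_{\rho}(x_{0})$ at $q$ (replacing the distance function by a smooth barrier in the usual way if $q$ is a cut point), the maximum principle gives $H_{V}\le H_{\partial B_{\rho}(x_{0})}(q)$. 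If $\rho$ did not tend to $0$ the right-hand side would stay bounded (bounded geometry), contradicting $H_{V}\to\infty$; hence $\rho\to0$, and then $H_{\partial B_{\rho}(x_{0})}(q)=\tfrac{2}{\rho}(1+o(1))$, so $\rho\le\tfrac{2}{H_{V}}(1+o(1))\to0$. Since $x_{0}\in\Omega_{V}$ was arbitrary, $\operatorname{diam}\Omega_{V}\to0$; in particular $\Omega_{V}$ is connected and sits in an arbitrarily small metric ball once $V$ is small. Combined with the near-Euclidean local isoperimetric inequality (whose constant tends to the Euclidean one as the enclosing ball shrinks) this upgrades the upper bound of the first step to $I(V)=(36\pi)^{1/3}V^{2/3}(1+o(1))$, i.e. $I(V)\sim(36\pi)^{1/3}V^{2/3}$ as $V\to0$.

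The last step is convexity, obtained by blow-up. Rescale the metric to $\hat g_{V}=r_{V}^{-2}g$ with $r_{V}=(3V/4\pi)^{1/3}$, so $\Omega_{V}$ becomes an isoperimetric region of volume $\tfrac{4}{3}\pi$ with $|\partial\Omega_{V}|_{\hat g_{V}}\to4\pi$; since $\operatorname{diam}_{g}\Omega_{V}\to0$, on the ball containing $\Omega_{V}$ the rescaled metric $\hat g_{V}$ is $C^{2}$-close to the flat $\delta$. By the compactness and regularity theory for isoperimetric boundaries (Allard regularity together with Schauder estimates for the constant-mean-curvature equation), together with a concentration-compactness argument -- a possible splitting into pieces of volumes $v_{1},\dots,v_{k}$ with $\sum v_{i}=\tfrac{4}{3}\pi$ is excluded by the strict subadditivity $\sum(36\pi)^{1/3}v_{i}^{2/3}>4\pi$ -- the rescaled surfaces converge in $C^{2}$ to the boundary of a volume-$\tfrac{4}{3}\pi$ isoperimetric region of $(\mathbb{R}^{3},\delta)$, i.e. to a round sphere, which is strictly convex. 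Hence for $V$ small the second fundamental form of $\Sigma_{V}$ is positive definite, so $\Omega_{V}$ is convex; as a bonus the $C^{2}$ convergence shows $\Sigma_{V}$ is nearly round in the sense used earlier, which is what Theorem \ref{rigidity2} will need.

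The step I expect to be the main obstacle is the diameter estimate: one must keep both $H_{V}\to\infty$ and the geodesic-sphere comparison under control \emph{uniformly}, with no deterioration as the regions possibly drift toward the end of $M$ -- this is exactly where the AF decay hypotheses enter, guaranteeing bounded curvature and a uniform injectivity-radius lower bound all the way to infinity, and where a little care with the cut locus is needed in the maximum principle. Given the diameter bound, the profile asymptotics and the blow-up argument for convexity are routine.
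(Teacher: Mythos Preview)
Your maximum-principle step in the diameter argument is wrong, and this breaks the whole second step. When $\overline{\Omega_V}\subset\overline{B_\rho(x_0)}$ with the boundaries tangent at $q$, the comparison goes the other way: at $q$ the outward normals of $\Sigma_V$ and $\partial B_\rho$ agree, and since $\Sigma_V$ lies on the concave side of $\partial B_\rho$, each principal curvature of $\Sigma_V$ (with respect to that normal) is \emph{at least} that of the geodesic sphere. Concretely, the distance function $f=d(x_0,\cdot)$ restricted to $\Sigma_V$ has a maximum at $q$, so $0\ge\Delta_{\Sigma_V}f(q)=\Delta_M f-\operatorname{Hess}_Mf(\nu,\nu)-H_V\langle\nabla f,\nu\rangle$, which in the model case reads $0\ge \tfrac{2}{\rho}-H_V$. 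Thus the correct inequality is $H_V\ge H_{\partial B_\rho}(q)\approx\tfrac{2}{\rho}$, i.e.\ $\rho\ge \tfrac{2}{H_V}(1+o(1))$. This is a \emph{lower} bound on $\rho$, not the upper bound you need; knowing $H_V\to\infty$ then tells you nothing about $\operatorname{diam}\Omega_V$. (The circumscribed-ball comparison always bounds the mean curvature from below; the inscribed-ball comparison bounds it from above, but that only controls the inradius.) Without the diameter bound, your blow-up by the volume scale $r_V=(3V/4\pi)^{1/3}$ is not justified: you have no control preventing $\Omega_V$ from being long and thin, or from having several distant components, so $C^2$-convergence of the rescalings to a single round sphere does not follow.

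The paper sidesteps this entirely by blowing up at a different scale. It takes a sequence $\Sigma_n$ with $V_n\to0$, sets $r_n=\max_{\Sigma_n}|A_n|$, and splits into two cases. If $r_n\to\infty$, the rescaled surfaces $r_n\Sigma_n$ have $\max|A'_n|=1$; compactness for stable CMC surfaces with bounded curvature (multiplicity-one graphical convergence, ruling out sheet-collapse by an area-decreasing cut-and-paste) produces a limit stable CMC surface in $\mathbb{R}^3$ with $|A'(0)|=1$, which by da~Silveira's classification must be a round sphere. This gives smooth closeness of $\Sigma_n$ to a small round sphere for $n$ large, and connectedness then follows from stability by testing with piecewise constants (using $\tfrac12 H^2+\operatorname{Ric}(\nu,\nu)>0$). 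If instead $|A_n|$ stays bounded, rescaling to unit volume forces the limit to be a union of planes enclosing finite volume, a contradiction. So the essential input you are missing is the curvature-scale blow-up together with the structure theorem for stable CMC surfaces in $\mathbb{R}^3$; the circumscribed-sphere comparison cannot replace it.
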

\begin{proof}
Let $\{\Sigma_n\}$ be a sequence of isoperimetric surfaces with second fundamental form $A_n$ and volume $V_n\rightarrow 0$. There have two possibilities:
\\1. $\{|A_n|\}$ is unbounded. Assume $r_n=max|A_n|=|A_n|(x_n)$, by scaling $\Sigma_n$ homothetically to $\Sigma'_n=r_n \Sigma_n$ with metric $g_n=r^2_ng$, also $r_n \rightarrow \infty$, $x_n \in \Sigma'_n$, second fundamental form of $\Sigma'_n$ satisfy $max|A'_n|=|A'_n(x_n)|=1$. We have $(M,x_n,g_n)\rightarrow (\mathbb{R}^3,0,\delta)$ smoothly, the limit manifold is standard $\mathbb{R}^3$ because the manifold is AF. Thus $\Sigma'_n$ is a sequence of stable $CMC$ surface with bounded curvature, locally $\Sigma'_n$ consists of certain number of sheets, each of them is graph over a bounded planar domain with bounded derivatives.
\\If two of the sheets become arbitrary close near some point when $n \rightarrow \infty$, then we can modify the surface to get a new one with smaller area and same volume. Hence by compactness results \cite{PR}, up to a subsequence, $\Sigma'_n \rightarrow \Sigma'$ smoothly with multiplicity one and $\Sigma' \subset \mathbb{R}^3$ is a stable CMC with constant $H_{\Sigma'}$ properly embedded in $\mathbb{R}^3$ endow with standard metric $\delta$, $0 \in \Sigma'$, $|A'(0)|^2=1$. By \cite{Sil}, $\Sigma'$ either a union of planes or a sphere. The curvature at origin is one imply that $\Sigma'$ is a unit sphere. Back to $\Sigma_n$, for $n$ large enough, the mean curvature $H_{\Sigma_n}$ of $\Sigma_n$ is large enough, such that
\begin{eqnarray}\frac{1}{2}H_{\Sigma_n}^2+Ric(n,n)>0.\end{eqnarray}
\\If $\Sigma_n$ is not connected, since the mean curvature of isoperimetric surface $\Sigma_n$ is same(see Appendix \ref{mean curvature}), for each component $\Sigma^i_n$, as $|A^i_n|^2\geq \frac{1}{2}H_{\Sigma^i_n}^2$, so we can get
\begin{eqnarray}|A^i_n|^2+Ric(n,n)>0\end{eqnarray}
 on the every component $\Sigma^i_n$. On the other hand, we can construct a variation$f_i$ on $\Sigma^i_n$ which is constant and $\sum_i{\int_{\Sigma^i_n}{f_i}}=0$ in the stability condition of isoperimetric inequality, this gives
 \begin{eqnarray}0\geq \sum_i f^2_i{\int_{\Sigma^i_n}{|A_n|^2+Ric(n,n)}},\end{eqnarray} a contradiction. So for large $n$, we know $\Sigma_n$ is connected and thus a sphere.
\\2. $\{|A_n|\}$ is bounded. Scaling $\Sigma_n$ to enclose volume 1. By the above argument we can get the limit consists of pairwise disjoint planes enclose volume 1, a contradiction. So the lemma follows.
\end{proof}
By above lemma, the rigidity follows from Theorem \ref{iso rigidity}. But it can also be proved by the monotonicity of Hawking mass with respect to volume of the connected isoperimetric surface. This method relies on the connectness of isoperimetric surface which used by \cite{Br}. Bray needed the connectness of isoperimetric surface when prove monotonicity of Hawking mass.
\subsection{Properties of $I$}

Isoperimetric profile $I$ contains important geometric information of the manifold. $I$ is nondecreasing in the outside of horizon. $I$ is concave if the manifold has nonnegative Ricci curvature. The existence and regularity properties of isoperimetic regions for $all$ volume for $AF$ is proved by \cite{Shi} combined with \cite{CCE}, we sketch the proof in Appendix \ref{existence} for completeness.

The continuity and differentiability of $I$ for $AF$ manifold is proved as in \cite{FN} for manifold with bounded geometry(Ricci curvature and volume of unit geodesic ball bounded below):
\begin{lemma}\label{conti and diff}
Given (M,g) is an AF manifold and $V \in (0, \infty)$, let $\Omega \subset M$ be
an isoperimetric region with $vol(\Omega) = V$ and denote $\partial \Omega$ as $\Sigma$. The isoperimetric profile has the following regularity:
\\a) I is continuous and have left and right derivatives at V , and $I'_+(V ) \leq H_\Sigma \leq I'_-(V )$, $I'_+(V )$ and $I'_-(V )$ are right and left continuous respectively.
\\b) $I''(V)I(V)^2 +\int_\Sigma(Ric(n,n)+|A_\Sigma|^2) \leq0$ holds in the sense of comparison functions, i.e. for every $V_0\geq0$, there exist a smooth function $I_{V_0}(V)\geq I(V)$, $I_{V_0}(V_0)=I(V_0)$, and $I_{V_0}''(V)I_{V_0}(V)^2 +\int_\Sigma(Ric(n,n)+|A_\Sigma|^2) \leq 0$.
\end{lemma}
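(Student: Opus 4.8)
\emph{Strategy.} I would reduce both statements to the analysis of the parallel (unit normal) variation of a single isoperimetric boundary, using two inputs as black boxes: the existence of isoperimetric regions for every volume in an AF manifold (Appendix \ref{existence}), and the fact that AF manifolds have bounded geometry, so that the continuity argument of \cite{FN} applies verbatim. Fix $V_0>0$ and let $\Omega_0$ be an isoperimetric region of volume $V_0$; by the interior regularity theory for isoperimetric boundaries in dimension three, $\Sigma_0=\partial\Omega_0$ is a smooth closed embedded stable CMC hypersurface, a priori possibly disconnected but with all components carrying the same mean curvature $H_{\Sigma_0}$ (Appendix \ref{mean curvature}).

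\emph{Part (b).} Flow $\Sigma_0$ along its unit normal and let $A(t)$, $V(t)$ denote the area and the enclosed volume at time $t$; these are smooth near $t=0$, $A(0)=|\Sigma_0|=I(V_0)$, and since the flow has unit speed $V'(t)=A(t)$, hence $V'(0)=|\Sigma_0|\neq0$ and $t\mapsto V(t)$ is locally invertible. Put $I_{V_0}(V):=A(t(V))$ near $V_0$; since the flowed regions are competitors for their enclosed volumes, $I_{V_0}\geq I$ there, with equality at $V_0$, and one extends $I_{V_0}$ smoothly to all of $[0,\infty)$ staying above $I$. The first variation of area gives $A'(0)=\int_{\Sigma_0}H_{\Sigma_0}=H_{\Sigma_0}|\Sigma_0|=V''(0)$, and the Riccati equation $\partial_tH_t=-|A_{\Sigma_t}|^2-Ric(n_t,n_t)$ for the mean curvature under unit normal flow, together with $\partial_t(d\mu_t)=H_t\,d\mu_t$, gives $A''(0)=H_{\Sigma_0}^2|\Sigma_0|-\int_{\Sigma_0}(|A_{\Sigma_0}|^2+Ric(n,n))$. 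A one line chain rule computation then yields $I_{V_0}'(V_0)=A'(0)/V'(0)=H_{\Sigma_0}$ and $I_{V_0}''(V_0)=-|\Sigma_0|^{-2}\int_{\Sigma_0}(|A_{\Sigma_0}|^2+Ric(n,n))$, that is $I_{V_0}''(V_0)\,I_{V_0}(V_0)^2+\int_{\Sigma_0}\big(Ric(n,n)+|A_{\Sigma_0}|^2\big)=0$, which is exactly the comparison assertion of part (b) (with equality at the point of contact). Note that only the CMC property of $\Sigma_0$ enters here, not its stability.

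\emph{Part (a).} Continuity of $I$ on $(0,\infty)$ follows as in \cite{FN} from the bounded geometry of AF manifolds: upper semicontinuity is immediate from the smooth barriers $I\leq I_{V_0}$, while lower semicontinuity comes from a concentration-compactness analysis of a minimizing sequence for the problem at $V_0$, in which any volume escaping to the AF end re-enters the estimate as a Euclidean ball via the AF asymptotics and Lemma \ref{shi rigidity}; continuity at $V=0$ and as $V\to\infty$ is then immediate from Lemma \ref{small isopri} and \cite{CESY}. Since $Ric$ is bounded below on $M$ and $|A|^2\geq0$, the quantities $I_{V_0}''(V_0)$ are bounded above, locally uniformly in $V_0$, so $I$ admits at every point a smooth upper support with locally uniformly bounded second derivative; combined with continuity this forces $I-\tfrac{C}{2}V^2$ to be concave on each compact subinterval, for a constant $C$ depending on that subinterval. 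A concave function has left and right derivatives everywhere, the right one right-continuous and the left one left-continuous, which gives the existence and one-sided continuity of $I'_\pm$; and from $I\leq I_{V_0}$, equality at $V_0$, and $I_{V_0}'(V_0)=H_{\Sigma_0}$, dividing $I(V)-I(V_0)\leq I_{V_0}(V)-I_{V_0}(V_0)$ by $V-V_0$ and letting $V\to V_0^{\pm}$ yields $I'_+(V_0)\leq H_{\Sigma_0}\leq I'_-(V_0)$.

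\emph{Main difficulty.} The one genuinely non-soft point is the lower semicontinuity of $I$, i.e. ruling out loss of area and volume to the asymptotically flat end; this is exactly where the AF structure (and, through it, the existence theory of Appendix \ref{existence}) must be used, the compact-manifold argument being insufficient here. A minor technical caveat is that $\Sigma_0$, or the limit of a minimizing sequence, may be disconnected or carry multiplicity; this is harmless for the barrier construction, since one simply flows every component by the unit normal and uses that the mean curvature is the same on all of them.
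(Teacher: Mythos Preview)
Your proof is correct and follows the same strategy as the paper: construct the upper barrier $I_{V_0}$ by flowing $\Sigma_0$ along its unit normal, compute its first two derivatives via the variational formulas to get $I_{V_0}'(V_0)=H_{\Sigma_0}$ and $I_{V_0}''(V_0)\,I_{V_0}(V_0)^2=-\int_{\Sigma_0}(|A|^2+Ric(n,n))$, then use the Ricci lower bound to conclude that $I$ minus a quadratic is locally concave and invoke standard properties of concave functions (Lemma~\ref{concave} in the paper). The derivation of $I'_+(V_0)\le H_{\Sigma_0}\le I'_-(V_0)$ from the barrier is likewise identical.

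The one place you diverge is the justification of continuity of $I$, and in particular your assessment of where the difficulty lies. You single out lower semicontinuity as ``the one genuinely non-soft point'' and propose a concentration-compactness analysis invoking Lemma~\ref{shi rigidity}. But once existence of isoperimetric regions at \emph{every} volume is granted as a black box (as you do, via Appendix~\ref{existence}), lower semicontinuity is just as elementary as upper semicontinuity: given an isoperimetric region $\Omega$ of volume $V$ near $V_0$, add or remove a small geodesic ball to obtain a competitor of volume exactly $V_0$, whence $I(V_0)\le I(V)+\mathrm{area}(\partial B_r)$. This is precisely the paper's argument in Appendix~\ref{continuity}. Your concentration-compactness route would also work, but it is heavier than needed and essentially re-runs the existence proof; Lemma~\ref{shi rigidity} plays no role in the continuity of $I$ once existence is in hand.
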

\begin{proof}
The continuity of $I$ is proved in Appendix \ref{continuity} by adding and subtracting a small geodesic ball to the isoperimetric regions under the condition of bounded geometry.
We only prove (b) which implies the differentiability of $I$. For every $V_0>0$, assume $\Omega_0$ is the isoperimetric region with volume $V_0$ and $\Sigma_0=\partial \Omega_0$ is the isoperimetric surface with unit outer normal $n_0$ and second fundamental form $A_0$, mean curvature $H_0$. In order to get a upper bound of $I''$ we do a unit normal variation on $\Sigma_0$. Let $\Sigma_t$ denote the surface by flowing out $\Sigma_0$ with unit speed along the normal $n_0$ for time t. Since $\Sigma_0$ is smooth embedded surface, there exits a $\delta>0$ such that $\Sigma_t$ exists for any $t\in (-\delta, \delta)$. Let $I_{V_0}(t)=area(\Sigma_t)$, by the first and second variational formula of area we have:
\begin{align}
I'_{V_0}(t)&=\int_{\Sigma_t}{H}d{\mu}\\
V'(t)&=I_{V_0}(t)\\
H'(t)&=-|A|^2-Ric(n,n)
\end{align}
We can also parameterize these isoperimetric surface by its volume as $\Sigma(V)$, and $I_{V_0}(V)=area(\Sigma(V))$, by definition of $\Sigma(V_0)$, $I_{V_0}(V)\geq I(V)$, $I_{V_0}(V_0)=I(V_0)$, and we have
\begin{eqnarray}I'_{V_0}(V)=\frac{\int_{\Sigma(V)}{H}d{\mu}}{I_{V_0}(V)}=H
\end{eqnarray}
The second derivative of $I_{V_0}$ is
\begin{eqnarray}
I''_{V_0}(V)&=&\frac{\int_{\Sigma(V)}(H^2-|A|^2-Ric(n,n))d{\mu}}{I^2_{V_0}(V)}-\frac{H}{I^2_{V_0}(V)}\int_{\Sigma_t}{H}d{\mu} \nonumber \\&=&-\frac{\int_{\Sigma(V)}{|A|^2+Ric(n,n)}d{\mu}}{I^2_{V_0}(V)}.
\end{eqnarray}
For AF three manifold, Ricci curvature is bounded blow. Thus there exist $k\in R$, such that $Ric\geq kg$, it follows that
\begin{eqnarray}I''_{V_0}(V)\leq-\frac{k}{I_{V_0}(V)}\end{eqnarray}
If $k\geq 0$, then $I_{V_0}(V)$ is concave, by Lemma \ref{concave} below we can get the concaveness of $I(V)$, then the conclusion follows. In particular, $I'_+$, $I'_-$ are all nonincreasing functions,they are right and left continuous respectively, $I''$ exists almost everywhere.
\\If $k<0$, let $\lambda=\lambda(k,a,b)=:\frac{k}{2\delta(a,b)}$, where $\delta(a,b)=min\{I(V): V\in[a,b]\}$ strictly positive by continuity of $I$. For every $V_0\in[a,b]$, $I_{V_0}(V)+\lambda V^2\geq I(V)+\lambda V^2$, we get $I_{V_0}(V)+\lambda V^2$ is concave. We can argue as above to get the same conclusion.
\end{proof}
In the proof above, we have used the properties of concave function :
\begin{lemma}\label{concave}
 a) \cite{MJ}Let $f:(a,b)\rightarrow R$ be a continuous function. Then $f$ is concave if and only if for every $x_0 \in (a,b)$ there exists an open interval $I_{x_0}\subseteq (a,b)$ of $x_0$ and a concave smooth function $g_{x_0} : I_{x_0}\rightarrow R$ such that $g_{x_0}(x_0) = f(x_0)$ and $g_{x_0}(x) \geq f(x)$ for every $x_0\in I_{x_0}$.
 \\b) If $f:(a,b)\rightarrow R$ be a concave function, then $f'_+$ and $f'_-$ are monotonic nonincreasing functions and also right and left continuous respectively. Moreover, $f''$ exists almost everywhere.
\end{lemma}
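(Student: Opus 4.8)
The plan is to treat part (a) as the cited statement from \cite{MJ}, so I would simply invoke it; if a self-contained argument is desired, the ``only if'' direction is immediate (a concave function on an open interval admits a supporting affine function at each point, which is smooth and concave), while the ``if'' direction is a local-to-global argument. One first handles the case where the majorants $g_{x_0}$ are \emph{strictly} concave at $x_0$ by contradiction: if the affine chord on some $[c,d]\subset(a,b)$ were violated, then $f$ minus that chord would attain a negative minimum at an interior point $x_0$, where $g_{x_0}$ minus the chord is a smooth function with an interior local minimum, forcing $g_{x_0}''(x_0)\ge 0$ and contradicting strict concavity. One then removes the strictness by replacing $f$ and each $g_{x_0}$ by $f-\varepsilon x^2$ and $g_{x_0}-\varepsilon x^2$, concluding that $f-\varepsilon x^2$ is concave for every $\varepsilon>0$, and letting $\varepsilon\downarrow 0$, since a pointwise limit of concave functions is concave.

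For part (b) I would argue directly, working with a concave $f$ on the open interval $(a,b)$ (hence automatically continuous). The key ingredient is the three--slope inequality: for $x<y<z$ in $(a,b)$, writing $y$ as a convex combination of $x$ and $z$ and using concavity gives
\[
\frac{f(y)-f(x)}{y-x}\ \ge\ \frac{f(z)-f(x)}{z-x}\ \ge\ \frac{f(z)-f(y)}{z-y}.
\]
Consequently, for fixed $x$ the difference quotient $q_x(t)=\dfrac{f(t)-f(x)}{t-x}$ is nonincreasing in $t$ on $(a,b)\setminus\{x\}$, so that $f'_+(x)=\lim_{t\downarrow x}q_x(t)=\sup_{t>x}q_x(t)$ and $f'_-(x)=\lim_{t\uparrow x}q_x(t)=\inf_{t<x}q_x(t)$ both exist and are finite, with $f'_+(x)\le f'_-(x)$. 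Applying the inequality to a triple $x<y<\cdot$ once more yields $f'_+(x)\ge \dfrac{f(y)-f(x)}{y-x}\ge f'_-(y)\ge f'_+(y)$ for $x<y$, so both $f'_+$ and $f'_-$ are monotone nonincreasing.

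Next I would establish the one--sided continuity. Since $f'_+$ is nonincreasing, $\ell:=\lim_{x\downarrow x_0}f'_+(x)$ exists with $\ell\le f'_+(x_0)$; conversely, for $x_0<x<y$ one has $f'_+(x)\ge\dfrac{f(y)-f(x)}{y-x}$, and letting $x\downarrow x_0$ (using continuity of $f$) and then $y\downarrow x_0$ gives $\ell\ge f'_+(x_0)$, so $f'_+$ is right continuous. Symmetrically, $f'_-(x)\le\dfrac{f(x)-f(y)}{x-y}$ for $y<x<x_0$, and letting $x\uparrow x_0$ and then $y\uparrow x_0$ shows $f'_-$ is left continuous. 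Finally, $f'_+$ is a monotone function, hence differentiable almost everywhere by the Lebesgue differentiation theorem for monotone functions; moreover the set $\{x:f'_+(x)<f'_-(x)\}$ indexes a family of pairwise disjoint nondegenerate open intervals (because $f'_-(y)\le f'_+(x)$ whenever $x<y$) and is therefore at most countable, so $f$ is differentiable off a null set and $f''$ exists almost everywhere.

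I do not anticipate a substantive obstacle here: part (b) is a classical fact about concave functions of one variable, and the only delicate point is the converse direction of (a), whose proof along the lines sketched above hinges on the perturbation trick of subtracting $\varepsilon x^2$ to reduce to the strictly concave case — this is precisely where one must use that the comparison functions are genuinely concave, not merely that $g_{x_0}''(x_0)\le 0$.
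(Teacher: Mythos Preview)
Your treatment of part (a) follows the same line as the paper's: the paper also proves the ``if'' direction by contradiction via the $\varepsilon x^2$ perturbation, locating an interior minimum of $f-\varepsilon x^2$ minus a chord and using the comparison function $g_{x_0}-\varepsilon x^2$ to force $g''_{x_0}(x_0)-2\varepsilon\ge 0$, which contradicts concavity of $g_{x_0}$. Your packaging (first handle strictly concave majorants, then perturb) is a minor reorganization of the same idea.

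For part (b) your route differs from the paper's. You argue directly from the three-slope inequality, deducing existence, monotonicity, and one-sided continuity of $f'_\pm$ by elementary manipulation of difference quotients; this is the classical self-contained approach. The paper instead takes monotonicity (and the a.e.\ existence of $f''$) as ``well known'' and proves right-continuity of $f'_+$ by invoking an integral representation $f(x)-f(x_0)=\int_{x_0}^{x}f'_+(t)\,dt$ (citing \cite{Wal} for this stronger Fundamental Theorem of Calculus), then bounding the integrand from above by $\lim_{x\to x_0^+}f'_+(x)$ via monotonicity to get the reverse inequality. Your argument avoids the external reference and the FTC subtlety for one-sided derivatives; the paper's argument is shorter once that integral identity is granted. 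Both are correct.
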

\begin{proof}
a)If $f$ is concave, just take $g$ to be linear. If $f$ is not concave, then there exists $\epsilon>0$, such that $f_\varepsilon(x)=f(x)-\epsilon x^2$ is not concave. So we can choose $x_1,x_3\in(a,b)$, such that the graph of $f_\varepsilon(x)$ lies below line $l(x)$ from $(x_1, f_\epsilon(x_1))$ to $(x_3, f_\epsilon(x_3))$. Assume $f_\varepsilon(x)-l(x)$ attain its minimum at $x_2\in (x_1,x_3)$.
\\By hypothesis, there is a concave smooth $g_{x_2}(x)\geq f(x)$, and $g_{x_2}(x_2)= f(x_2)$. Then $g_\varepsilon(x)=g_{x_2}(x)-\epsilon x^2\geq f_\varepsilon(x)$, $g_\varepsilon(x_2)=f_\varepsilon(x_2)$, so we have that $g_\varepsilon(x)-l(x)$ also attain its minimum at $x_2\in (x_1,x_3)$ which implies $g''_\varepsilon(x_2)\geq 0$, but $g''_\varepsilon(x_2)=g''_{x_2}(x_2)-2\epsilon \leq -2\varepsilon$, a contradiction.
\\
\\b)It is well know that $f'_+$ and $f'_-$ are monotonic nonincreasing and $f''$ exists almost everywhere, so we just prove the right continuous of $f'_+$ and left continuous of $f'_-$ follows similarly. For any $x_0\in (a,b)$, by monotonicity of $f'_+$ have
\begin{eqnarray}\label{eq:1}\lim\limits_{x\to x^+_0}{f'_+(x)}\leq f'_+(x_0).\end{eqnarray}
On the other hand,
\begin{eqnarray}\label{eq:2}f'_+(x_0)=\lim\limits_{x\to x^+_0}\frac{f(x)-f(x_0)}{x-x_0}=\lim\limits_{x\to x^+_0}\frac{\int_{x_0}^{x}{f'_+(t)}dt}{x-x_0}
\end{eqnarray}
where we have used the stronger versions of the Fundamental Theorem of Calculus\cite{Wal}
\begin{eqnarray}\label{calculus}f(x)-f(x_0)=\int_{x_0}^{x}{f'_+(t)}dt\end{eqnarray}
whenever $f$ is continuous and $f'_+\in L^1$. Again by the monotonicity we have
\begin{eqnarray}\label{eq:3}f'_+(t)\leq \lim\limits_{x\to x^+_0}{f'_+(x)}
\end{eqnarray}
combined with (\ref{eq:2}) and (\ref{eq:3}) we get
\begin{eqnarray}\label{eq:4}f'_+(x_0)\leq\lim\limits_{x\to x^+_0}\frac{\int_{x_0}^{x}{\lim\limits_{x\to x^+_0}{f'_+(x)}}dt}{x-x_0}=\lim\limits_{x\to x^+_0}{f'_+(x)}\end{eqnarray}
Then (\ref{eq:1}) and (\ref{eq:4}) gives the right continuity of $f'_+$.

\end{proof}

\subsection{Monotonicity of $m^+_H$}
For differentiable point of $I$ we have $H(V)=I'(V)$, then we can replace $H$ with $I'$ in Hawking mass in order to simplify Hawking mass only as a function of volume. But $I$ may not be differentiable for every volume, and there is a jump for $H$ from $I'_+$ to $I'_-$ at the volume which is not differentiable. By the compactness of isoperimetric surface, see \cite{MMPR}, there is a surface which achieves the minimal(maximal) mean curvature enclosing the same volume. So we can define the maximal Hawking mass as:
\begin{defn}\label{maximal mass}
Let (M,g) be an AF three manifold with nonnegative scalar curvature, $\Sigma \subset M$ be a isoperimetric surface of volume V, then the maximal Hawking mass is
$m^+_H(V)=\sqrt{I(V)}(16\pi-I(V){I'_+(V)}^2).$

\end{defn}
$m^+_H$ is the maximal Hawking mass when $I$ is not differentiable and reduce to the ordinary Hawking mass at the differentiable point of $I$. We have the following monotonicity of $m^+_H$:
\begin{lemma}\cite{Br} \label{increasing}
Let (M,g) be an AF three manifold with nonnegative scalar curvature, assume for every $V>0$ there is a connected isoperimetric surface enclosing volume $V$, and also $I(V)$ is increasing, then $m^+_H(V)$ is nondecreasing outside the horizon.
\end{lemma}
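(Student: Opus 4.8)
The plan is to recover the monotonicity of \cite{Br} within the comparison-function framework already developed in Lemma~\ref{conti and diff}. Since $(M,g)$ is AF its Ricci curvature is bounded below, so by Lemma~\ref{conti and diff}(b) the map $V\mapsto I(V)+\lambda V^2$ is concave on every compact subinterval of $(0,\infty)$ for a suitable constant $\lambda$. On such a subinterval $I$ is therefore Lipschitz and bounded away from $0$, $I'_+$ is right continuous and of bounded variation, and, by Alexandrov's theorem, at a.e.\ $V$ the function $I$ is twice differentiable, $I'_+=I'$, and $I'_+$ is itself differentiable. As the isoperimetric surface $\Sigma(V)$ is connected it is a CMC surface with $H_{\Sigma(V)}=I'_+(V)$, so at a twice-differentiability point $\int_{\Sigma(V)}H^2=I(V)I'(V)^2$ and $m^+_H(V)$ equals the ordinary Hawking mass $m_H(\Sigma(V))=\sqrt I\,(16\pi-I(I')^2)$. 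The proof then has two parts: a pointwise differential inequality holding a.e., and a measure-theoretic assembly.

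First I would prove the pointwise inequality. Fix $V_0$ where $I$ is twice differentiable and differentiate $m_H=16\pi I^{1/2}-I^{3/2}(I')^2$:
\begin{equation}
\frac{d}{dV}m_H(V_0)=\frac{I'(V_0)}{2\sqrt{I(V_0)}}\Big(16\pi-3I(V_0)I'(V_0)^2-4I(V_0)^2I''(V_0)\Big).
\end{equation}
As $I$ is increasing, $I'(V_0)\ge 0$, so it suffices to bound the bracket below. By Lemma~\ref{conti and diff}(b) there is a smooth $I_{V_0}\ge I$ with $I_{V_0}(V_0)=I(V_0)$ and $I_{V_0}''(V_0)I(V_0)^2\le-\int_{\Sigma_0}(|A|^2+Ric(n,n))$, where $\Sigma_0=\Sigma(V_0)$; since $I_{V_0}-I\ge0$ has an interior minimum at $V_0$ we get $I''(V_0)\le I_{V_0}''(V_0)$, hence $-4I(V_0)^2I''(V_0)\ge 4\int_{\Sigma_0}(|A|^2+Ric(n,n))$. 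Now substitute the Gauss-equation identity from the proof of Lemma~\ref{CY}, namely $|A|^2+Ric(n,n)=\tfrac{1}{2}(R+|A^0|^2)+\tfrac{3}{4}H^2-K_{\Sigma_0}$, use $\int_{\Sigma_0}H^2=I(V_0)I'(V_0)^2$ and Gauss--Bonnet $\int_{\Sigma_0}K_{\Sigma_0}=2\pi\chi(\Sigma_0)$; the bracket is then at least $16\pi-8\pi\chi(\Sigma_0)+2\int_{\Sigma_0}(R+|A^0|^2)$. Here both hypotheses enter: $\Sigma_0$ connected forces $\chi(\Sigma_0)\le 2$, so $16\pi-8\pi\chi(\Sigma_0)\ge 0$, and $R\ge 0$. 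Thus $\frac{d}{dV}m_H\ge0$ at a.e.\ $V$.

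For the assembly I would show that the distributional derivative of the right-continuous function $m^+_H=16\pi I^{1/2}-I^{3/2}(I'_+)^2$ is a nonnegative measure. Its absolutely continuous part has density equal to the a.e.\ derivative above, hence $\ge 0$. Its singular part equals $-I^{3/2}$ times the singular part of $d\big((I'_+)^2\big)$. At a jump of $I'_+$, concavity of $I+\lambda V^2$ gives $I'_-\ge I'_+\ge0$, so $I'_+$ jumps downward, and since $I$ is continuous and $b\mapsto-I^{3/2}b^2$ is nonincreasing for $b\ge0$, the resulting atom of $dm^+_H$ is $\ge0$. On the Cantor part $I'_+$ is continuous, so the chain rule gives the Cantor part of $d\big((I'_+)^2\big)$ as $2I'_+$ times the Cantor part of $dI'_+$, which is $\le0$ by semiconcavity of $I$; since $I'_+\ge0$ this again contributes nonnegatively to $dm^+_H$. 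Hence $dm^+_H\ge0$ and $m^+_H$ is nondecreasing on $(0,\infty)$ (in particular outside the horizon, where $I$ is increasing).

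I expect the differential inequality of the second paragraph to be the conceptual heart, and in particular the point that connectedness is what supplies $\chi(\Sigma_0)\le 2$: a disjoint union of $k\ge 2$ spheres would have $\int_\Sigma K_\Sigma=4\pi k>4\pi$ and the sign would be lost, so connectedness is genuinely needed, not merely convenient. The real technical obstacle, however, is the low (merely semiconcave) regularity of $I$, which is what necessitates the measure-theoretic assembly of the third paragraph and is precisely why Lemma~\ref{conti and diff} was phrased in terms of comparison functions; this is handled following \cite{Br}.
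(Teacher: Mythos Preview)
Your argument is correct and shares the same conceptual core as the paper's: the Gauss-equation identity combined with Gauss--Bonnet and connectedness of $\Sigma(V)$ yields the differential inequality $I''_{V_0}\le (16\pi-3I'^2_{V_0}I_{V_0})/4I_{V_0}^2$ for the comparison function, which is exactly what drives monotonicity of $m_H^+$. Where you diverge from the paper is in the assembly step. The paper shows weak monotonicity directly: it writes $\int m_H^+\phi'$ with $\phi\ge 0$ compactly supported, replaces derivatives by forward difference quotients, integrates by parts, bounds $\Delta^{-h}(\Delta^h I)^2$ above by the corresponding expression for the smooth comparison function $I_{V_0}$ (using $I_{V_0}\ge I$ with equality at $V_0$), and then passes to the limit via Fatou. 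You instead invoke BV structure: semiconcavity makes $I'_+$ locally BV, so $dm_H^+$ is a measure with a Lebesgue decomposition, and you check separately that the absolutely continuous, jump, and Cantor parts are nonnegative. Both routes are standard ways to upgrade an a.e.\ differential inequality for a semiconcave function to genuine monotonicity; yours is somewhat more systematic (and makes the role of the jump inequality $I'_-\ge I'_+\ge 0$ completely transparent), while the paper's difference-quotient argument stays closer to elementary calculus and avoids citing the Vol'pert chain rule for the Cantor part.
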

\begin{proof}
By Gauss equation
\begin{eqnarray}
K=\frac{R}{2}-Ric(n,n)+\frac{1}{2}(H^2-|A|^2)
\end{eqnarray}
So we have
\begin{eqnarray}
|A|^2+Ric(n,n)=\frac{R}{2}-K+\frac{1}{2}(H^2+|A|^2)
\end{eqnarray}
by $|A|^2=|A^0|^2+\frac{1}{2}H^2$, and $R\geq0$,we have
\begin{eqnarray}
I''_{V_0}(V)=-\frac{\int_{\Sigma(V)}{|A|^2+Ric(n,n)}d{\mu}}{I^2_{V_0}(V)}\leq \frac{\int_{\Sigma(V)}{K-\frac{3}{4}H^2}d{\mu}}{I^2_{V_0}(V)}
\end{eqnarray}
\\By the connectness of $\Sigma(V)$, we have \begin{eqnarray}\int_{\Sigma(V)}Kd{\mu}=2\pi\chi(\Sigma(V))\leq4\pi,\end{eqnarray}
then \begin{eqnarray}\label{second order}I''_{V_0}(V)\leq \frac{16\pi-3I'_{V_0}(V)^2I_{V_0}(V)}{4I^2_{V_0}(V)}\end{eqnarray}
As we have proved that $I'_+(V)$ is right continuous, so is maximal Hawking mass. Thus it is sufficient to prove $m^+_H(V)$ is weak nondecreasing, i.e. for any $[a,b]\in(0,\infty)$, $\int_{a}^{b}{m^+_H(V)\phi'(V)}dV\leq0$ for all smooth nonnegative $\phi\in C_c^\infty(a,b)$,$\phi \geq 0$. The reason to do so is that $m^+_H(V)$ has only countable jump point. Let the difference quotient defined by $$\Delta^h F(V)=\frac{1}{h}(F(V+h)-F(V))$$
Then \begin{eqnarray}
\int_{a}^{b}{m^+_H(V)\phi'(V)}dV&=&\int_{a}^{b}{\sqrt{I(V)}(16\pi-I(V){I'_+(V)}^2)\phi'(V)}dV\nonumber\\
&=&\lim\limits_{h\to 0^+}{\int_{a}^{b}{\sqrt{I(V)}(16\pi-I(V){\Delta^h I(V)}^2)\Delta^h\phi(V)}dV}\nonumber\\
&=&-\lim\limits_{h\to 0^+}{\int_{a}^{b}\Delta^{-h}\{\sqrt{I(V)}(16\pi-I(V){\Delta^h I(V)}^2)\}\phi(V)}dV\nonumber\\
&=&\lim\limits_{h\to 0^+}{\int_{a}^{b}\{\phi I^{\frac{3}{2}}\{\Delta^{-h} (\Delta^h I)^2}-I'\frac{16\pi-3I'^2I}{2I^2}\}dV
\end{eqnarray}
\\where we use the fact that $I'_{+}=I'_{-}$ almost everywhere.
\\Since $I_{V_0}(V_0)=I(V_0)$, and $I_{V_0}(V)\geq I(V)$, also $I(V)$ is increasing, we can get $\Delta^{-h} (\Delta^h I)^2(V_0)\leq \Delta^{-h} (\Delta^h I_{V_0})^2(V_0)$, and $I'_{V_0}\geq0$, so

\begin{eqnarray}\int_{a}^{b}{m^+_H(V)\phi'(V)}dV&\leq&\lim\limits_{h\to 0^+}{\int_{a}^{b}\{\phi I_{V_0}^{\frac{3}{2}}\{\Delta^{-h} (\Delta^h I_{V_0})^2}-I_{V_0}'\frac{16\pi-3I_{V_0}'^2I_{V_0}}{2I_{V_0}^2}\}dV_{V_0}\nonumber\\
&=&\int_{a}^{b}{2\phi I_{V_0}^{\frac{3}{2}}I_{V_0}'\{I''_{V_0}}-\frac{16\pi-3I_{V_0}'^2I_{V_0}}{4I_{V_0}^2}\}dV_{V_0}\leq0
\end{eqnarray}
where we used (\ref{second order}) and the Fatou lemma for the last equality. Hence,$m^+_H(V)$ is nondecreasing.
\end{proof}

\begin{remark}
$\\$
1) Hawking mass is also monotonic along the stable CMC foliation as long as the area is nondecrasing, the proof is the same as above.
\\2) We can see that the monotonicity of maximal Hawking mass relies heavily on the connectness of isoperimetric surface. If the isoperimetric surface have more than one components, Bray in \cite{Br} consider the sum of three halves of area of the components $$F(V)=inf\{\sum_{i}Area(\Sigma_i)^{\frac{3}{2}}:\{\Sigma_i\} \ enclose \ volume \ V outside \ the \ horizons \}$$ under the condition the components are disjoint with each other, then he proved the mass
$$m^+(V ) = F(V )^\frac{1}{3}(36\pi-{F'_+}^2)/144\pi^\frac{3}{2}$$ is nondecreasing. In fact, for $F$ he got the estimate
\begin{eqnarray}F''(V)\leq \frac{36\pi-F'(V)^2}{6F(V)}\end{eqnarray}
then the prove follows as above. The minimizing surfaces are CMC generally with different mean curvatures on each component. When the minimizer of $F$ has only one component it must be isoperimetric surface. We have already know that for large enough volume in AF manifolds the isoperimetric surfaces are spheres close to coordinate spheres and  $m^+(V)=m^+_H(V)$, their limits are ADM mass of the manifold when volume goes to infinity.

\end{remark}
Now we are in the position to prove the rigidity of small isoperimetric surface:
\begin{proof}[Proof of Themrem \ref{rigidity2}]

 First we claim that \begin{eqnarray}\label{m0}\lim\limits_{V\to 0}m^+_H(V)=0.\end{eqnarray}
 In fact, by Lemma \ref{small isopri} we know the isoperimetric surface is of sphere type when volume small enough. Combined with Lemma \ref{CY} we get \begin{eqnarray}\label{m01}\lim\limits_{V\to 0}m^+_H(V)\geq0.\end{eqnarray}
 By definition, \begin{eqnarray}m^+_H(V)=\sqrt{I(V)}(16\pi-I(V){I'_+(V)}^2)\leq 16\pi \sqrt{I(V)}\end{eqnarray} which implies \begin{eqnarray}\label{m02}\lim\limits_{V\to 0}m^+_H(V)\leq0.\end{eqnarray}
 Thus the claim follows by (\ref{m01}) and  (\ref{m02}).
\\If there exists a isoperimetric surface $\Sigma$ with volume $0<V_0\leq\delta_0$, such that $m^+_H(V_0)=0$, then by monotonicity of Lemma \ref{increasing} for $m^+_H$ and (\ref{m0}), we get \begin{eqnarray}m^+_H(V)\equiv0, for \ any \ V\in[0,V_0], \end{eqnarray} thus
\begin{eqnarray}I(V){I'_+(V)}^2 \equiv\ 16\pi \ on \ [0,V_0].\end{eqnarray}
Since $I$ is continuous by Lemma \ref{conti and diff}, we get
\begin{eqnarray}I'_+(V)=I'(V)\ on \ [0,V_0].\end{eqnarray}
\\Then by strictly increasing of $I$ in \cite{Cho}, we get \begin{eqnarray}I'=\sqrt{\frac{16\pi}{I}},\end{eqnarray} since $I(0)=0$, we have \begin{eqnarray}I(V)=(36\pi)^{\frac{1}{3}}V^{\frac{2}{3}}\ on \ [0,V_0].\end{eqnarray} Then by Lemma \ref{shi rigidity} above we conclude that $(M,g)$ is isometric to $\mathbb{R}^3$.
\end{proof}

\section{Appendix}
\subsection{Spherical harmonics on $S^2$}\label{shperical harmonics}
$$\Delta_{S^2}=\frac{1}{\sin{\theta}}\frac{\partial}{\partial \theta}(\sin \theta \frac{\partial}{\partial \theta})+\frac{1}{\sin^2{\theta}}\frac{\partial^2}{\partial \varphi^2}$$
the eigenvalues of $-\Delta_{S^2}$ are
$$\lambda=l(l+1),l=0,1,2...$$
the eigenfunctions are
$$Y_l^m(\theta,\varphi)=\sqrt{\frac{2l+1}{4\pi}\frac{(l-|m|)!}{(l+|m|)!}}\sin^{|m|}{\theta}P_l^{|m|}(\cos \theta)e^{im\varphi}$$
where $m=-l,...,l$, and $P_l(x)$ is the Legendre polynomials,
$P_0(x)=1,P_1(x)=x,P_2(x)=\frac{1}{2}(3x^2-1)$
the reduction formula is
$$(n+1)P_{n+1}(x)=(2n+1)xP_n(x)-nP_{n-1}(x)$$
\textbf{The real form of spherical harmonics are}\\
$l=0$
$$Y_{0,0}=\frac{1}{2}\sqrt{\frac{1}{\pi}}$$
$l=1$
$$Y_{1,0}=\sqrt{\frac{3}{4\pi}}\cos\theta $$
$$Y_{1,-1}=\sqrt{\frac{3}{4\pi}}\sin\theta \sin \varphi$$

$$Y_{1,1}=\sqrt{\frac{3}{4\pi}}\sin\theta \cos \varphi$$
$l=2$
$$Y_{2,-2}=\frac{1}{4}\sqrt{\frac{15}{\pi}}\sin^2\theta \sin 2\varphi$$
$$Y_{2,-1}=\frac{1}{4}\sqrt{\frac{15}{\pi}}\sin2\theta \sin \varphi$$
$$Y_{2,0}=\frac{1}{4}\sqrt{\frac{5}{\pi}}(3\cos^2\theta-1) $$
$$Y_{2,1}=\frac{1}{4}\sqrt{\frac{15}{\pi}}\sin2\theta \cos \varphi$$
$$Y_{2,2}=\frac{1}{4}\sqrt{\frac{15}{\pi}}\sin^2\theta \cos 2\varphi$$
$l=4$
$$Y_{4,-4}=\frac{3}{16}\sqrt{\frac{35}{\pi}}\sin^4\theta \sin 4\varphi$$
$$Y_{4,-3}=\frac{3}{4}\sqrt{\frac{35}{2\pi}}\sin^3\theta\cos\theta \sin 3\varphi$$
$$Y_{4,-2}=\frac{3}{8}\sqrt{\frac{5}{\pi}}\sin^2\theta(7\cos^2\theta-1) \sin 2\varphi$$
$$Y_{4,-1}=\frac{3}{8}\sqrt{\frac{10}{\pi}}\sin\theta\cos\theta(7\cos^2\theta-3) \sin \varphi$$
$$Y_{4,0}=\frac{3}{16}\sqrt{\frac{1}{\pi}}(35\cos^4\theta-30\cos^2\theta+3)$$
$$Y_{4,1}=\frac{3}{8}\sqrt{\frac{10}{\pi}}\sin\theta\cos\theta(7\cos^2\theta-3) \cos \varphi$$
$$Y_{4,2}=\frac{3}{8}\sqrt{\frac{5}{\pi}}\sin^2\theta(7\cos^2\theta-1) \cos 2\varphi$$
$$Y_{4,3}=\frac{3}{4}\sqrt{\frac{35}{2\pi}}\sin^3\theta\cos\theta \cos 3\varphi$$
$$Y_{4,4}=\frac{3}{16}\sqrt{\frac{35}{\pi}}\sin^4\theta \cos 4\varphi$$
To compute $u_2^2$, we need to decompose the following terms into different order spherical harmonics:
$$Y_{2,0}^2=\frac{3}{7}\sqrt{\frac{1}{\pi}}Y_{4,0}+\frac{1}{7}\sqrt{\frac{5}{\pi}}Y_{2,0}+\frac{1}{4\pi}$$
$$Y_{2,-2}^2=-\frac{1}{2}\sqrt{\frac{5}{7\pi}}Y_{4,4}+\frac{1}{14}\sqrt{\frac{1}{\pi}}Y_{4,0}-\frac{1}{7}\sqrt{\frac{5}{\pi}}Y_{2,0}+\frac{1}{4\pi}$$
$$Y_{2,2}^2=\frac{1}{2}\sqrt{\frac{5}{7\pi}}Y_{4,4}+\frac{1}{14}\sqrt{\frac{1}{\pi}}Y_{4,0}-\frac{1}{7}\sqrt{\frac{5}{\pi}}Y_{2,0}+\frac{1}{4\pi}$$
$$Y_{2,-1}^2=-\frac{1}{7}\sqrt{\frac{5}{\pi}}Y_{4,2}-\frac{2}{7}\sqrt{\frac{1}{\pi}}Y_{4,0}-\frac{1}{14}\sqrt{\frac{15}{\pi}}Y_{2,2}+\frac{1}{14}\sqrt{\frac{5}{\pi}}Y_{2,0}+\frac{1}{4\pi}$$
$$Y_{2,1}^2=\frac{1}{7}\sqrt{\frac{5}{\pi}}Y_{4,2}-\frac{2}{7}\sqrt{\frac{1}{\pi}}Y_{4,0}+\frac{1}{14}\sqrt{\frac{15}{\pi}}Y_{2,2}+\frac{1}{14}\sqrt{\frac{5}{\pi}}Y_{2,0}+\frac{1}{4\pi}$$

$$Y_{2,-2}Y_{2,2}=\frac{1}{2}\sqrt{\frac{5}{7\pi}}Y_{4,-4}$$

$$Y_{2,-2}Y_{2,0}=\frac{1}{14}\sqrt{\frac{15}{\pi}}Y_{4,-2}-\frac{1}{7}\sqrt{\frac{5}{\pi}}Y_{2,-2}$$
$$Y_{2,2}Y_{2,0}=\frac{1}{14}\sqrt{\frac{15}{\pi}}Y_{4,2}-\frac{1}{7}\sqrt{\frac{5}{\pi}}Y_{2,2}$$
$$Y_{2,-1}Y_{2,0}=\frac{1}{7}\sqrt{\frac{15}{2\pi}}Y_{4,-1}+\frac{1}{14}\sqrt{\frac{5}{\pi}}Y_{2,-1}$$
$$Y_{2,1}Y_{2,0}=\frac{1}{7}\sqrt{\frac{15}{2\pi}}Y_{4,1}+\frac{1}{14}\sqrt{\frac{5}{\pi}}Y_{2,1}$$
$$Y_{2,-1}Y_{2,1}=\frac{1}{7}\sqrt{\frac{5}{\pi}}Y_{4,-2}+\frac{1}{14}\sqrt{\frac{15}{\pi}}Y_{2,-2}$$

$$Y_{2,-2}Y_{2,-1}=-\frac{1}{2}\sqrt{\frac{5}{14\pi}}Y_{4,3}-\frac{1}{14}\sqrt{\frac{5}{2\pi}}Y_{4,1}+\frac{1}{14}\sqrt{\frac{15}{\pi}}Y_{2,1}$$
$$Y_{2,2}Y_{2,1}=\frac{1}{2}\sqrt{\frac{5}{14\pi}}Y_{4,3}-\frac{1}{14}\sqrt{\frac{5}{2\pi}}Y_{4,1}+\frac{1}{14}\sqrt{\frac{15}{\pi}}Y_{2,1}$$

$$Y_{2,-2}Y_{2,1}=\frac{1}{2}\sqrt{\frac{5}{14\pi}}Y_{4,-3}-\frac{1}{14}\sqrt{\frac{5}{2\pi}}Y_{4,-1}+\frac{1}{14}\sqrt{\frac{15}{\pi}}Y_{2,-1}$$
$$Y_{2,2}Y_{2,-1}=\frac{1}{2}\sqrt{\frac{5}{14\pi}}Y_{4,-3}+\frac{1}{14}\sqrt{\frac{5}{2\pi}}Y_{4,-1}+\frac{1}{14}\sqrt{\frac{15}{\pi}}Y_{2,-1}$$
\subsection{Existence of isoperimetric surface for all volumes \cite{CCE}} \label{existence}
\begin{proof}
By Proposition 4.2 of \cite{EM}, for every $V>0$, there exists an isoperimetric region $\Omega$ and a radius $r\geq0$, such that
\begin{eqnarray}\label{em}
|\Omega|_g+\frac{4}{3}\pi r^3=V,\ |\partial \Omega|_g+4\pi r^2=I(V)
\end{eqnarray}
By the isoperimetric inequality of Shi \cite{Shi} on nonnegative scalar curvature manifold, we can get for every $r>0$, there is a bounded region $\Omega'$ with finite perimeter $|\partial \Omega'|_g$ lies arbitrary far out in the asymptotic flat region of $(M,g)$, such that
\begin{eqnarray}\label{em1}
|\partial \Omega'|_g=4\pi r^2,\ |\Omega'|_g>\frac{4}{3}\pi r^3
\end{eqnarray}
If $r>0$ in (\ref{em}), then there is $\Omega'$ satisfies (\ref{em1}). We consider the region $\Omega \cup \Omega'$, then
\begin{eqnarray}
|\Omega|_g+|\Omega'|_g>V,\ |\partial \Omega|_g+|\partial \Omega'|_g=I(V)
\end{eqnarray}
But by the defnition of $I$ and above equality we get
\begin{eqnarray}
I(|\Omega|_g+|\Omega'|_g)\leq|\partial \Omega|_g+|\partial \Omega'|_g=I(V)
\end{eqnarray}
By the strictly increasing of $I$ \cite{Cho}, we have
\begin{eqnarray}
I(|\Omega|_g+|\Omega'|_g)>I(V)
\end{eqnarray}
a contradiction. Thus $r=0$ which implies that $\Omega$ is the isoperimetric region of volume $V$.

\end{proof}
\subsection{ Continuity of $I$}\label{continuity}
\begin{proof}
The proof is from \cite{FN} for bounded geometry where they don't have existence of isoperimetric surface. We need to prove the upper semicontinuity and lower semicontinuity for $I$, i.e. for any $V_0>0$,
\begin{eqnarray}
\limsup\limits_{V\to V^+_0}{I(V)}\leq I(V_0),\ \limsup\limits_{V\to V^-_0}{I(V)}\leq I(V_0)
\end{eqnarray}
\begin{eqnarray}
I(V_0)\leq \liminf\limits_{V\to V^+_0}{I(V)},\ I(V_0)\leq \liminf\limits_{V\to V^-_0}{I(V)}
\end{eqnarray}
\textbf{Upper semicontinuity of $I$}: Given $ V_0>0$, there is isoperimetric region $\Omega_0$ such that $vol(\Omega_0)=V_0,\ area(\partial\Omega_0)=I(V_0)$.
For any $V\uparrow V_0$, we can subtract a small geodesic ball $B_r(p)$, such that $vol(B_r(p))=V_0 -V$,\ $vol(\Omega_0 \backslash B_r(p))=V$, thus we have
\begin{eqnarray}I(V)\leq area(\partial \Omega_0)+area(\partial B_r(p))=I(V_0)+area(\partial B_r(p))\end{eqnarray}
this implies \begin{eqnarray}\label{upper semi1}
\limsup\limits_{V\to V^+_0}I(V)\leq area(\partial \Omega_0)+\lim\limits_{V\to V^+_0}area(\partial B_r(p))=I(V_0)\end{eqnarray}

For any $V\downarrow V_0$, we can add a small geodesic ball $B_r(p)$, such that $vol(B_r(p))=V-V_0$,\ $vol(\Omega_0 \bigcup B_r(p))=V$, thus we have
\begin{eqnarray}
I(V)\leq area(\partial \Omega_0)+area(\partial B_r(p))=I(V_0)+area(\partial B_r(p))
\end{eqnarray}
this implies \begin{eqnarray}\label{upper semi2}
\limsup\limits_{V\to V^-_0}I(V)\leq area(\partial \Omega_0)+\lim\limits_{V\to V^-_0}area(\partial B_r(p))=I(V_0)
\end{eqnarray}
So we get the upper semicontinuity of $I$ from (\ref{upper semi1}) and (\ref{upper semi2}).\\
\textbf{Lower semicontinuity of $I$}: for  $V\uparrow V_0$, there exists isoperimetric region $\Omega$, such that $vol(\Omega)=V$, adding a small geodesic ball $B_r(p)$, such that $vol(B_r(p))=V_0 -V$, thus we have
\begin{eqnarray}
I(V_0)\leq area(\partial \Omega)+area(\partial B_r(p))=I(V)+area(\partial B_r(p))
\end{eqnarray}
this implies \begin{eqnarray}\label{lower semi1}
I(V_0)\leq \liminf\limits_{V\to V^-_0}I(V)+\lim\limits_{V\to V^-_0}area(\partial B_r(p))\leq \liminf\limits_{V\to V^-_0}I(V)
\end{eqnarray}
for  $V\downarrow V_0$, substract a small geodesic ball $B_r(p)$, such that $vol(B_r(p))=V -V_0$, thus we have
\begin{eqnarray}
I(V_0)\leq area(\partial \Omega)+area(\partial B_r(p))=I(V)+area(\partial B_r(p))
\end{eqnarray}
this implies \begin{eqnarray}\label{lower semi2}
I(V_0)\leq \liminf\limits_{V\to V^-_0}I(V)+\lim\limits_{V\to V^-_0}area(\partial B_r(p))\leq \liminf\limits_{V\to V^-_0}I(V)
\end{eqnarray}
The lower semicontinuity follows from (\ref{lower semi1}) and (\ref{lower semi2})
\end{proof}

\subsection{Mean curvature of isoperimetric surface} \label{mean curvature}
\begin{proof}
We know that isoperimetric surface is stable CMC and the mean curvature is same on each component. This follows by the stability condition when choosing piecewise constant variation function on each component. Assume $\Sigma=\Sigma_1\bigcup\Sigma_2$ is an isoperimetric surface with disjoint components $\Sigma_1$ and $\Sigma_2$. If the mean curvature of $\Sigma_1$ and $\Sigma_2$ is constant $H_1$ and $H_2$, respectively. Let
\begin{eqnarray}
f=\begin{cases}
-|\Sigma_2| & \text{on \ $\Sigma_1$}\\
|\Sigma_1| & \text{on \ $\Sigma_2$}
\end{cases}
\end{eqnarray}
As $\Sigma $ is an isoperimetric surface, so the first variation formula
\begin{eqnarray}
0&=&\int_{\Sigma}{fH}=\int_{\Sigma_1\bigcup\Sigma_2}{fH}\nonumber\\
&=&-|\Sigma_2|H_1 |\Sigma_1|+|\Sigma_1|H_2 |\Sigma_2|=|\Sigma_1| |\Sigma_2|( H_2-H_1)
\end{eqnarray}
So we have $H_1 =H_2$, which implies that mean curvature on each component is same.
\end{proof}


\section*{Acknowledgement}
I would like to sincerely thank my advisor, Gang Tian, for his encouragement and constant support. I am very grateful to Yuguang Shi for suggesting this problem and for many stimulating discussions as well as pointing out the hyperbolic case for this paper. Many thanks to Prof. Ahmad El Soufi for explanation of Lemma \ref{cmc rigidity} in his paper. I also want to thank Jie Qing for pointing out the relation to eigenvalue problem, Jerry Kazdan and Yanyan Li for many discussions on mean field equation, Qian Wang and Zhongmin Qian for getting me interested in this area. Also I want to thank Wenshuai Jiang and Dongyi Wei for many useful comments and discussions. Additional thanks go to Jianchun Chu, Shiguang Ma, Dong Zhang, Haobin Yu and Wenlong Wang for interesting discussions on the subject.

\end{document}